\documentclass[11pt]{article}
\usepackage[letterpaper,margin=1in]{geometry}
\usepackage{amsmath,amsthm,amssymb,mathtools}
\usepackage{newtxtext,newtxmath}
\usepackage{microtype}
\usepackage{graphicx}
\usepackage{booktabs,tabularx,array}
\usepackage{enumitem}
\usepackage{xcolor}
\usepackage{caption,subcaption}
\usepackage{placeins}
\usepackage{float}
\usepackage[numbers,sort&compress]{natbib}
\usepackage{hyperref}
\usepackage[nameinlink,capitalise,noabbrev]{cleveref}
\usepackage{url}

\definecolor{linkblue}{RGB}{0,78,128}
\hypersetup{
  colorlinks=true,
  linkcolor=linkblue,
  citecolor=linkblue,
  urlcolor=linkblue,
  pdfauthor={Mojtaba Soltanalian and Ahmad Mousavi},
  pdftitle={The Rank-Collapse Principle for Quadratic Optimization}
}
\setlist[itemize]{leftmargin=1.6em,itemsep=0.25em,topsep=0.35em}
\setlist[enumerate]{leftmargin=1.8em,itemsep=0.3em,topsep=0.4em}
\newtheorem{theorem}{Theorem}[section]
\newtheorem{lemma}[theorem]{Lemma}
\newtheorem{proposition}[theorem]{Proposition}
\newtheorem{corollary}[theorem]{Corollary}
\theoremstyle{definition}
\newtheorem{definition}[theorem]{Definition}

\theoremstyle{remark}
\newtheorem{remark}[theorem]{Remark}

\newcommand{\R}{\mathbb{R}}
\newcommand{\C}{\mathbb{C}}
\newcommand{\X}{\mathcal{X}}
\newcommand{\Y}{\mathcal{Y}}

\newcommand{\Sc}{\operatorname{sc}}
\newcommand{\Argmax}{\operatorname*{Arg\,max}}

\newcommand{\conv}{\operatorname{conv}}
\newcommand{\Vtx}{\operatorname{vert}}
\newcommand{\lin}{\operatorname{lin}}
\newcommand{\proj}{\operatorname{proj}}
\newcommand{\rank}{\operatorname{rank}}
\newcommand{\supp}{\operatorname{supp}}

\newcommand{\cO}{\mathcal{O}}

\newcommand{\cD}{\mathcal{D}}
\newcommand{\cH}{\mathcal{H}}

\newcommand{\inner}[2]{\langle #1,#2\rangle}
\newcommand{\norm}[1]{\lVert #1\rVert}
\newcommand{\abs}[1]{\lvert #1\rvert}

\title{The Rank-Collapse Principle for Quadratic Optimization}
\author{
Mojtaba Soltanalian\thanks{Department of Electrical and Computer Engineering, University of Illinois Chicago, Chicago, IL 60607, USA. Email: \href{mailto:msol@uic.edu}{\nolinkurl{msol@uic.edu}}.}
\and
Ahmad Mousavi\thanks{Department of Mathematics and Statistics, American University, Washington, DC 20016, USA. Email: \href{mailto:mousavi@american.edu}{\nolinkurl{mousavi@american.edu}}.}
}
\date{}

\begin{document}

\maketitle

\begin{abstract}
Quadratic optimization becomes hard as soon as either the matrix in the quadratic form has an unfavorable curvature or the feasible set is discrete, combinatorial, or otherwise nonconvex.  A complementary phenomenon is also well known in the signal-processing and optimization communities: when the matrix in the quadratic form has small rank, some hard-looking quadratic programs admit exact polynomial-time algorithms for fixed rank.  We study the common positive-semidefinite geometry behind this phenomenon.  If $Q=BB^{\mathsf T}\succeq0$, the objective depends on $x$ only through the rank-space shadow $y=B^{\mathsf T}x$.  Every optimal shadow $y^\star$ uniquely maximizes the linear functional $y\mapsto\inner{y^\star}{y}$ and satisfies the quantitative margin
\[
\inner{y^\star}{y^\star-y}\ge \tfrac12\norm{y^\star-y}_2^2.
\]
Thus nonlinear optimality collapses to a low-dimensional, self-generated linear exposure direction.  We call this the \emph{rank-collapse principle}.  The principle alone does not imply a finite candidate set: efficient exact optimization additionally depends on the projected or active geometry of the feasible family.  We organize this distinction through projected-shadow scattering and active-structure collapse, relate it explicitly to established zonotope, convex-combinatorial, edge-skeleton, projected-normal-fan, and fixed-rank sparse-PCA methods, and derive tie-safe consequences for binary and finite-phase vectors, cardinality constraints, matroid bases, and sparse PCA.  We also give directional-stability and approximately low-rank certificates, together with reproducible experiments.  The paper's contribution is a unified, careful framework and a set of quantitative consequences, rather than a claim to originate the known fixed-rank tractability results that motivate it.
\end{abstract}

\noindent\textbf{Keywords.} low-rank quadratic optimization; convex maximization; projected polytopes; normal fans; hyperplane arrangements; binary quadratic programming; sparse principal component analysis; parameterized algorithms.

\section{Introduction}
\label{sec:introduction}

Quadratic optimization is difficult for two independent reasons: unfavorable curvature and complicated feasible geometry.  Indefinite quadratic programming is NP-hard under elementary constraints \citep{murty1987quadratic,pardalos1991oneeig}; binary quadratic models include spin glasses and maximum cut \citep{barahona1982ising,goemans1995maxcut}; and semidefinite relaxations are often used precisely because exact optimization is unavailable at scale \citep{nesterov1998semidefinite,luo2010sdr}.  A complementary phenomenon is also well known in the community: when the matrix in the quadratic form has small rank, some hard-looking quadratic programs admit exact polynomial-time algorithms for fixed rank.  Binary, finite-phase, matroid, and sparse-PCA instances provide important examples.

This paper identifies the common geometric statement behind that regime, separates it from the extra assumptions needed for an algorithm, and develops the resulting theory in a form that survives ties, duplicate projections, continuous feasible sets, and approximate low rank.  Consider
\begin{equation}
\label{eq:main-problem}
        \max_{x\in\X} q_Q(x):=x^{\mathsf T}Qx,
        \qquad Q=BB^{\mathsf T}\succeq0,
        \qquad B\in\R^{n\times r},
\end{equation}
where $\X\subseteq\R^n$ is nonempty and compact.  We take $B$ to have full column rank after deleting redundant columns, so $r=\rank(Q)$; later bounds use the possibly smaller effective shadow dimension.  The objective factors as
\begin{equation}
\label{eq:shadow-factor}
        q_Q(x)=\norm{B^{\mathsf T}x}_2^2.
\end{equation}
Thus $Q$ cannot distinguish two feasible points with the same \emph{shadow} $B^{\mathsf T}x$.  The ambient search may be enormous, but the nonlinear objective lives in at most $r$ dimensions.

Low dimensionality alone, however, does not imply a finite oracle image.  Sparse principal component analysis is an instructive example: the active support is piecewise constant in rank space, while the optimal point on a fixed support generally moves continuously.  Conversely, for a discrete polyhedral family, low-dimensional projected normal fans can have only polynomially many cells at fixed rank.  The right theory therefore has three levels:
\begin{enumerate}[label=(\roman*)]
\item \emph{rank-space collapse}: the objective depends only on $B^{\mathsf T}x$;
\item \emph{exposure collapse}: an optimal shadow exposes itself through a rank-space linear functional;
\item \emph{candidate or active-structure collapse}: the feasible-set geometry makes the relevant exposed objects enumerable.
\end{enumerate}
We use \emph{rank-collapse principle} for the first two universal facts and treat the third as the algorithmic consequence that must be proved family by family.

\begin{figure}[t]
\centering
\includegraphics[width=0.92\linewidth]{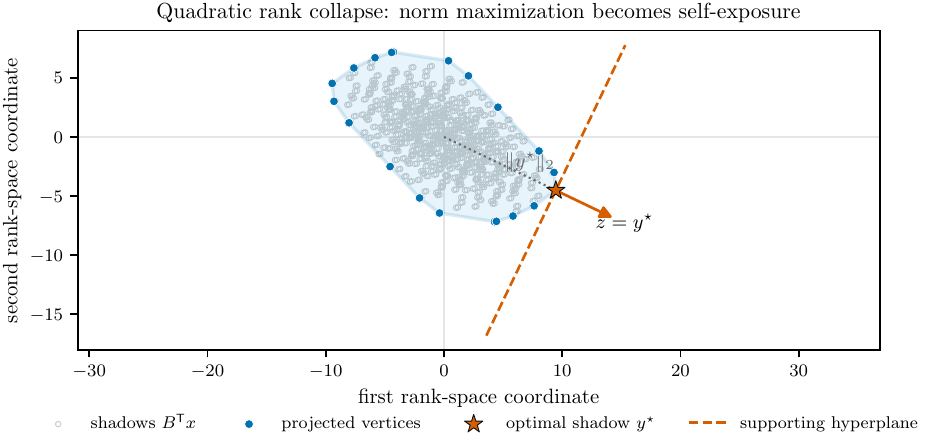}
\caption{The rank-collapse principle in two-dimensional rank space.  The quadratic problem maximizes distance from the origin over the shadow set.  An optimal shadow $y^\star$ is uniquely exposed by the direction $z=y^\star$; the dashed supporting line meets the projected hull only at $y^\star$.  Feasible points sharing this shadow are indistinguishable to the quadratic objective.}
\label{fig:principle}
\end{figure}

\paragraph{Contributions.}
The paper contributes the following organizing results and consequences.  Several associated tractability results are established in prior work; our purpose is to place them in one tie-safe framework and make the additional assumptions visible.
\begin{enumerate}
\item We prove a \emph{self-exposure theorem} with a quadratic separation margin.  It replaces the false statement that an arbitrary deterministic tie-breaking oracle must return every optimizer.  Instead, every oracle response in the self-generated direction has the same optimal shadow and is globally optimal.
\item We define the intrinsic \emph{rank-space scattering number} $\abs{\Vtx(\conv(B^{\mathsf T}\X))}$.  Unlike a count of tie-broken feasible representatives, it is invariant under duplicate shadows, oracle conventions, and redundant descriptions.
\item We give a self-contained edge-direction projected-fan formulation: one feasible-set linear-oracle call per chamber of a central arrangement enumerates all projected vertices.  This is an expository bridge to established zonotope, convex-combinatorial, edge-skeleton, and projected-normal-fan machinery, not a claim that the enumeration mechanism itself is new.
\item We derive structured consequences for binary and finite-phase vectors, top-$k$ subsets, and matroid bases.  For generic projected cubes, the bound is sharp.  In real rank two, a one-pass angular sweep solves binary instances in $O(n\log n)$ time and $O(n)$ memory.
\item We formalize \emph{active-structure collapse}.  A finite collection of supports or faces can suffice even when the point-oracle image is infinite.  This yields a clean exact sparse-PCA derivation and explains the distinction between finite support enumeration and continuous within-support responses.
\item We prove directional perturbation bounds and approximately low-rank objective certificates.  For a positive- or negative-semidefinite residual, the additive loss improves from the universal $2M^2\norm{E}_2$ to $M^2\norm{E}_2$, and a computable upper bound enables rank-adaptive certification.
\end{enumerate}

\paragraph{Scope and limits.}
The universal theorem concerns maximization of a positive-semidefinite quadratic.  It does not assert that every low-rank quadratic problem is easy.  Indeed, rank-one negative-semidefinite maximization over signs already encodes \textsc{Partition}; see \cref{sec:limits}.  Nor does low rank alone make the candidate set finite.  What low rank guarantees is a low-dimensional exposure direction.  Polynomial solvability requires a finite or otherwise manageable projected fan or active-structure family.

\paragraph{Organization.}
\Cref{sec:literature} positions the contribution.  \Cref{sec:principle} proves self-exposure, stability, and the convex extension.  \Cref{sec:scattering} develops scattering and projected-fan enumeration.  \Cref{sec:families} treats structured discrete families, and \cref{sec:active} develops active-structure collapse and sparse PCA.  \Cref{sec:approximate} gives approximate-rank certificates, \cref{sec:limits} gives the curvature boundary, and \cref{sec:experiments} reports computational evidence.  Complete degeneracy arguments, complex realification, an oracle-ascent result, and full experimental protocols appear in the appendices and are explicitly cited below.

\section{Relation to prior work}
\label{sec:literature}

\paragraph{Fixed-rank binary and phase optimization.}
The closest class-specific predecessor is the zonotope algorithm of Ferrez, Fukuda, and Liebling \citep{ferrez2005zonotope} for fixed-rank convex quadratic maximization in binary variables.  Karystinos and Liavas \citep{karystinos2010binary} later developed an efficient hyperspherical partition method, and Kyrillidis and Karystinos \citep{kyrillidis2014mpsk} treated finite-phase sequences.  Those papers already establish fixed-rank tractability for important alphabets.  Our aim is not to relabel those algorithms as new.  We isolate a tie-safe quadratic self-exposure identity, define an intrinsic projected complexity measure, and show how the same logical structure extends to oracle-defined combinatorial families, active supports, directional stability, and approximate-rank certificates.

\paragraph{Convex combinatorial optimization.}
Onn and Rothblum \citep{onnrothblum2004convex} showed how fixed-dimensional convex optimization over combinatorial families can reduce to linear optimization using projected polyhedral geometry; Onn's convex matroid optimization result \citep{onn2003matroid} is a particularly relevant specialization, and \citet{onn2010nonlinear} gives a broader treatment.  This literature is the natural home of the general convex extension in \cref{thm:convex-exposure}.  The quadratic theorem is stronger than generic convex exposure: its subgradient is the optimal shadow itself, and the resulting identity yields uniqueness and a Euclidean stability margin.

\paragraph{Projected normal fans and low-dimensional objectives.}
Emiris, Fisikopoulos, and G{\"a}rtner \citep{emiris2016edgeskeleton} give a total-polynomial edge-skeleton and vertex-enumeration framework for polytopes specified by optimization or separation oracles together with a superset of edge directions.  Scott and Geunes \citep{scottgeunes2025normalfan} give a projected normal-fan algorithm for low-rank nonlinear optimization over polytopes.  Edge directions and normal-fan refinement also underlie the convex-combinatorial framework.  Our projected-fan theorem should be read as a self-contained bridge from quadratic self-exposure to that machinery, not as a claim that hyperplane arrangements are new.  Related low-dimensional objective algorithms and approximation schemes include \citet{mittal2013lowrank}, \citet{hunkenschroder2023cube}, and \citet{dadush2023lowdim}; they address different oracle, integrality, minimization, or approximation regimes.  Recent factorized binary polynomial optimization \citep{delpia2025factorized} further demonstrates that fixed factorization dimension can reveal strongly polynomial discrete algorithms.

\paragraph{Sparse PCA.}
Constant-rank sparse PCA was treated by Asteris, Papailiopoulos, and Karystinos \citep{asteris2014spca}, and Del Pia \citep{delpia2023spca} subsequently established polynomial-time global optimization for sparse PCA at fixed covariance-matrix rank, including a disjoint-support variant.  Our sparse-PCA section gives a different conceptual contribution: it uses the example to show why finite point-candidate theory is too narrow and proves a finite \emph{active-structure} theorem that remains valid when the point oracle varies continuously within each support cell.

\paragraph{What is contributed here.}
The paper packages the following chain in a single notation and makes clear which arrows are universal and which require family-specific geometry:
\begin{equation*}
\begin{aligned}
\text{PSD quadratic optimum}
&\Longrightarrow \text{unique self-exposed shadow with margin}\\
&\Longrightarrow \text{projected-vertex or active-structure enumeration}\\
&\Longrightarrow \text{exact or certified optimization}.
\end{aligned}
\end{equation*}
The first implication and its quantitative consequences are quadratic-specific; the second makes the algorithmic assumptions explicit; and the approximate-rank theory shows how the exact principle degrades under spectral residuals.  This formulation avoids conflating low rank, finite oracle images, and polynomial-time enumeration.

\section{The rank-collapse principle}
\label{sec:principle}

Throughout this section, $\X\subseteq\R^n$ is nonempty and compact, $B\in\R^{n\times r}$, and $Q=BB^{\mathsf T}$.  Define the shadow set and its compact convex hull by
\begin{equation}
\label{eq:Y-P}
       \Y_B:=B^{\mathsf T}\X,
       \qquad
       P_B:=\conv(\Y_B)\subseteq\R^r.
\end{equation}
Maximizing $\norm{y}_2^2$ over $\Y_B$ or $P_B$ gives the same value: convexity bounds the value at a convex combination by the largest value among its constituents.

For $z\in\R^r$, define the set-valued rank-space linear oracle
\begin{equation}
\label{eq:oracle}
       \cO_B(z):=\Argmax_{x\in\X}\inner{z}{B^{\mathsf T}x}.
\end{equation}
The set-valued definition is essential.  A deterministic selection from an argmax set can discard an optimizer that shares its shadow with another feasible point.  The zero direction is also special: $\cO_B(0)=\X$, so it contains no information.  Meaningful exposure uses $z\ne0$.

\begin{definition}[shadow equivalence]
Two feasible points are shadow-equivalent, written $x\sim_B x'$, if $B^{\mathsf T}x=B^{\mathsf T}x'$.  The quadratic objective is constant on each equivalence class.
\end{definition}

\subsection{Self-exposure and the quadratic margin}

\begin{theorem}[Rank-collapse principle: unique self-exposure]
\label{thm:self-exposure}
Let $x^\star$ solve \eqref{eq:main-problem} and set $y^\star=B^{\mathsf T}x^\star$.  Then, for every $y\in P_B$,
\begin{equation}
\label{eq:margin-identity}
\inner{y^\star}{y^\star-y}
 =\frac12\Bigl(\norm{y^\star}_2^2-\norm{y}_2^2+\norm{y^\star-y}_2^2\Bigr)
 \ge \frac12\norm{y^\star-y}_2^2.
\end{equation}
Consequently, $y^\star$ is the unique maximizer of $y\mapsto\inner{y^\star}{y}$ over $P_B$.  Moreover,
\begin{equation}
\label{eq:oracle-self}
      x^\star\in\cO_B(y^\star),
      \qquad
      x\in\cO_B(y^\star)\ \Longrightarrow\ B^{\mathsf T}x=y^\star
      \ \Longrightarrow\ x\text{ is globally optimal.}
\end{equation}
\end{theorem}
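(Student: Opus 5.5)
The plan is to first establish that $y^\star$ maximizes $\norm{y}_2^2$ over all of $P_B$, not merely over $\Y_B$, and then derive everything else from a polarization identity. Since $x^\star$ is optimal, $\norm{y^\star}_2^2=q_Q(x^\star)\ge q_Q(x)=\norm{B^{\mathsf T}x}_2^2$ for every $x\in\X$, so $\norm{y^\star}_2^2\ge\norm{y}_2^2$ on $\Y_B$. To extend this to $P_B=\conv(\Y_B)$, write $y=\sum_i\lambda_i y_i$ as a finite convex combination with $y_i\in\Y_B$. Carath\'eodory's theorem allows this even though $\Y_B$ may be infinite, and $\conv$ of a compact set in $\R^r$ is already compact, so no closure is needed. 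Convexity of $\norm{\cdot}_2^2$ then gives $\norm{y}_2^2\le\sum_i\lambda_i\norm{y_i}_2^2\le\norm{y^\star}_2^2$. This is exactly the remark made after \eqref{eq:Y-P}.

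Next, verify the equality in \eqref{eq:margin-identity} by expanding $\norm{y^\star-y}_2^2=\norm{y^\star}_2^2-2\inner{y^\star}{y}+\norm{y}_2^2$ and rearranging. This gives $2\inner{y^\star}{y^\star-y}=\norm{y^\star}_2^2-\norm{y}_2^2+\norm{y^\star-y}_2^2$. The first step shows $\norm{y^\star}_2^2-\norm{y}_2^2\ge0$ for $y\in P_B$, which yields the inequality. Uniqueness follows at once: if $y\in P_B$ satisfies $\inner{y^\star}{y}\ge\inner{y^\star}{y^\star}$, then $0\ge\inner{y^\star}{y^\star-y}\ge\frac12\norm{y^\star-y}_2^2$, which forces $y=y^\star$. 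Since $y^\star\in\Y_B\subseteq P_B$, it attains the maximum, so it is the unique maximizer.

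For \eqref{eq:oracle-self}, I would apply the previous step to shadows of feasible points. For any $x\in\X$, $B^{\mathsf T}x\in P_B$, so $\inner{y^\star}{B^{\mathsf T}x}\le\inner{y^\star}{y^\star}=\inner{y^\star}{B^{\mathsf T}x^\star}$, and hence $x^\star\in\cO_B(y^\star)$. If $x\in\cO_B(y^\star)$, then $\inner{y^\star}{B^{\mathsf T}x}=\inner{y^\star}{y^\star}$, and uniqueness over $P_B$ gives $B^{\mathsf T}x=y^\star$. Then $q_Q(x)=\norm{y^\star}_2^2=q_Q(x^\star)$ by \eqref{eq:shadow-factor}, so $x$ is globally optimal.

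The degenerate case $y^\star=0$ needs a separate check. There the optimal value is $0$, so every shadow is zero, $P_B=\{0\}$, and all statements hold trivially even though $\cO_B(0)=\X$. The only genuinely delicate point in the whole argument is the passage from $\Y_B$ to $P_B$, which Carath\'eodory handles; the rest is the algebraic identity.
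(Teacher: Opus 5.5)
Your proof is correct and follows the paper's argument essentially step for step: you extend norm-optimality of $y^\star$ from $\Y_B$ to $P_B$ by convexity, apply the polarization identity, use strict positivity of the margin for $y\ne y^\star$ to get uniqueness, and then read off the oracle consequences. Carath\'eodory is not actually needed in the first step, since $\conv(\Y_B)$ consists of finite convex combinations by definition, but invoking it does no harm.
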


\begin{proof}
Optimality of $y^\star$ for norm maximization gives $\norm{y}_2^2\le\norm{y^\star}_2^2$ for every $y\in P_B$.  The polarization identity yields the equality in \eqref{eq:margin-identity}; dropping its nonnegative first difference gives the inequality.  If $y\ne y^\star$, the right-hand side is positive, so $\inner{y^\star}{y}<\norm{y^\star}_2^2$ and the linear maximizer is unique in shadow space.  Every $x\in\cO_B(y^\star)$ therefore has shadow $y^\star$ and hence objective $\norm{y^\star}_2^2$.
\end{proof}

\begin{remark}[The zero-optimum case]
If $\norm{y^\star}_2=0$, then every shadow in $P_B$ is zero, every feasible point is optimal, and no nonzero exposing direction can distinguish feasible points.  Otherwise $y^\star\ne0$, so \cref{thm:self-exposure} supplies a nontrivial rank-space direction.
\end{remark}

The theorem is stronger than the familiar supporting-linearization inequality.  It identifies the exposing normal exactly and gives a quadratic separation margin, which is the basis of the perturbation result below.

\subsection{Directional stability}

\begin{theorem}[Stability of an inexact exposing direction]
\label{thm:direction-stability}
Let $y^\star$ be an optimal shadow, let $\widehat z\in\R^r$ satisfy $\norm{\widehat z-y^\star}_2\le\varepsilon$, and let
\[
       \widehat y\in\Argmax_{y\in P_B}\inner{\widehat z}{y}.
\]
Then
\begin{equation}
\label{eq:direction-stability}
       \norm{\widehat y-y^\star}_2\le2\varepsilon.
\end{equation}
If $R:=\norm{y^\star}_2$, then
\begin{equation}
\label{eq:direction-objective}
       0\le \norm{y^\star}_2^2-\norm{\widehat y}_2^2\le4R\varepsilon.
\end{equation}
If $\Y_B=\{y^\star\}$, exact recovery is immediate.  Otherwise, if $\Y_B$ is finite and
\[
       \delta:=\min\{\norm{y-y^\star}_2:y\in\Y_B,\ y\ne y^\star\}>0,
\]
then $\varepsilon<\delta/2$ implies $\widehat y=y^\star$.
\end{theorem}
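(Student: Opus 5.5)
The plan is to combine the quadratic margin of \cref{thm:self-exposure} with the optimality of $\widehat y$ for the perturbed linear functional. Write $\widehat z=y^\star+e$ with $\norm{e}_2\le\varepsilon$. Since $y^\star\in P_B$ and $\widehat y$ maximizes $\inner{\widehat z}{\cdot}$ over $P_B$, we have $\inner{\widehat z}{\widehat y-y^\star}\ge0$, that is,
\[
\inner{y^\star}{y^\star-\widehat y}\le\inner{e}{\widehat y-y^\star}\le\varepsilon\norm{\widehat y-y^\star}_2 .
\]
Applying \eqref{eq:margin-identity} with $y=\widehat y\in P_B$ bounds the left side below by $\tfrac12\norm{y^\star-\widehat y}_2^2$. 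Hence $\tfrac12 d^2\le\varepsilon d$ with $d=\norm{\widehat y-y^\star}_2$. Either $d=0$ or $d\le2\varepsilon$, which gives \eqref{eq:direction-stability}.

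For \eqref{eq:direction-objective}, the lower bound $0\le\norm{y^\star}_2^2-\norm{\widehat y}_2^2$ holds because $y^\star$ maximizes the norm over $P_B$, as noted after \eqref{eq:Y-P}. For the upper bound, the equality in \eqref{eq:margin-identity} gives
\[
\norm{y^\star}_2^2-\norm{\widehat y}_2^2=2\inner{y^\star}{y^\star-\widehat y}-d^2\le 2\varepsilon d-d^2 .
\]
The maximum of $2\varepsilon d-d^2$ over $d$ is $\varepsilon^2$. Alternatively, Cauchy--Schwarz gives $\norm{y^\star}_2^2-\norm{\widehat y}_2^2\le2\inner{y^\star}{y^\star-\widehat y}\le 2Rd\le4R\varepsilon$, which is the stated form. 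I will present the Cauchy--Schwarz route since it matches the statement.

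For the finite case, the step needing care is that $\widehat y$ lies in $P_B$, not necessarily in $\Y_B$. When $\Y_B$ is finite, $P_B$ is a polytope whose vertices lie in $\Y_B$, so I will first handle the case where $\widehat y$ is a vertex. Then $\widehat y\in\Y_B$. If $\widehat y\ne y^\star$, we would have $\delta\le d\le2\varepsilon<\delta$, a contradiction, so $\widehat y=y^\star$.

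For a general maximizer, note that the argmax set $F$ of $\inner{\widehat z}{\cdot}$ over $P_B$ is a face of $P_B$. Every vertex of $F$ is a vertex of $P_B$, hence equals $y^\star$ by the previous step. So $F=\{y^\star\}$ and $\widehat y=y^\star$. The case $\Y_B=\{y^\star\}$ is trivial, since then $P_B=\{y^\star\}$. This face argument is the main point needing care; the rest is the two-line margin computation.
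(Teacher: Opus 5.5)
Your proof is correct. The first two parts match the paper's argument up to bookkeeping. For the objective bound, the paper factors $\norm{y^\star}_2^2-\norm{\widehat y}_2^2=\inner{y^\star+\widehat y}{y^\star-\widehat y}$ and uses $\norm{\widehat y}_2\le R$. You instead drop the $-d^2$ term of the polarization identity and apply Cauchy--Schwarz to $\inner{y^\star}{y^\star-\widehat y}$. Both give $2Rd\le4R\varepsilon$.

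The finite case is where you genuinely diverge. The paper does not reuse the $2\varepsilon$ ball. It applies the margin afresh to each competitor $y\in\Y_B\setminus\{y^\star\}$, obtaining $\inner{\widehat z}{y^\star-y}\ge\tfrac12\norm{y^\star-y}_2^2-\varepsilon\norm{y^\star-y}_2>0$. So $y^\star$ strictly beats every generator and is therefore the unique maximizer over $P_B=\conv(\Y_B)$. You instead get exact recovery as a corollary of the ball bound plus $\delta$-separation, then pass from vertices to the whole maximizing face.

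Your route is more modular. The paper's is self-contained and gives an explicit strict gap against each competitor. That gap is exactly the tie-hyperplane distance used in the normal-cone interpretation of Appendix~\ref{secS:normal}. Your face argument is valid, but a more elementary substitute suffices: if a maximizer is written as $\sum_i\lambda_i y_i$ with $y_i\in\Y_B$ and $\lambda_i>0$, then every $y_i$ is itself a maximizer, hence equals $y^\star$.

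Finally, do not discard your $\varepsilon^2$ bound; it is correct. Combine it with the trivial bound $\norm{y^\star}_2^2-\norm{\widehat y}_2^2\le R^2$. This gives $\norm{y^\star}_2^2-\norm{\widehat y}_2^2\le\min\{R^2,\varepsilon^2\}\le R\varepsilon$, which implies the stated inequality with constant $1$ in place of $4$.
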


\begin{proof}
Let $d=y^\star-\widehat y$.  By \eqref{eq:margin-identity}, $\tfrac12\norm d_2^2\le\inner{y^\star}{d}$.  Since $\widehat y$ maximizes the perturbed linear functional, $\inner{\widehat z}{d}\le0$.  Hence
\[
\frac12\norm d_2^2
\le \inner{y^\star-\widehat z}{d}+\inner{\widehat z}{d}
\le \varepsilon\norm d_2,
\]
which proves \eqref{eq:direction-stability}.  Because $y^\star$ has maximum norm, $\norm{\widehat y}_2\le R$, and therefore
\[
\norm{y^\star}_2^2-\norm{\widehat y}_2^2
=\inner{y^\star+\widehat y}{d}
\le2R\norm d_2\le4R\varepsilon.
\]
In the finite case, fix any $y\in\Y_B\setminus\{y^\star\}$ and write $d_y=y^\star-y$.  Then
\[
\inner{\widehat z}{d_y}
=\inner{y^\star}{d_y}+\inner{\widehat z-y^\star}{d_y}
\ge \tfrac12\norm{d_y}_2^2-\varepsilon\norm{d_y}_2>0
\]
when $\varepsilon<\delta/2$.  Hence $y^\star$ strictly beats every other element of $\Y_B$ under the perturbed functional and therefore uniquely maximizes it over $P_B=\conv(\Y_B)$.
\end{proof}

\Cref{thm:direction-stability} is independent of a particular enumeration algorithm.  It quantifies how accurately an exposing direction must be computed, and it yields an exact-recovery radius whenever the relevant shadow set is separated.  Appendix~\ref{secS:normal} gives an equivalent normal-cone interpretation.

\subsection{A convex low-dimensional extension}

The exposure argument extends beyond squared norms, but the quadratic uniqueness and margin do not.

\begin{theorem}[Convex exposure]
\label{thm:convex-exposure}
Let $f:\R^r\to\R$ be finite and convex, and let $x^\star$ maximize $f(B^{\mathsf T}x)$ over $\X$.  Put $y^\star=B^{\mathsf T}x^\star$.  For every $g\in\partial f(y^\star)$,
\begin{equation}
\label{eq:convex-exposure}
        x^\star\in\cO_B(g).
\end{equation}
If $g=0$, the conclusion is formally true but algorithmically vacuous.
\end{theorem}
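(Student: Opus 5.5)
The plan is to prove the statement directly from the subgradient inequality and the definition of the oracle \eqref{eq:oracle}, mirroring the first half of the proof of \cref{thm:self-exposure} but without the polarization identity, which is unavailable for a general convex $f$. First, since $f$ is finite and convex on all of $\R^r$, it is continuous and $\partial f(y^\star)$ is nonempty, compact and convex. So the statement is not vacuous, and we may fix an arbitrary $g\in\partial f(y^\star)$. By definition of the subdifferential, for every $y\in\R^r$,
\[
f(y)\ \ge\ f(y^\star)+\inner{g}{y-y^\star}.
\]

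Next, specialize this to $y=B^{\mathsf T}x$ for an arbitrary $x\in\X$. Optimality of $x^\star$ gives $f(B^{\mathsf T}x)\le f(y^\star)$. Combining the two inequalities yields
\[
\inner{g}{B^{\mathsf T}x-y^\star}\ \le\ f(B^{\mathsf T}x)-f(y^\star)\ \le\ 0,
\]
that is, $\inner{g}{B^{\mathsf T}x}\le\inner{g}{B^{\mathsf T}x^\star}$ for all $x\in\X$. The maximum in \eqref{eq:oracle} is attained because $\X$ is compact and the functional is linear, hence continuous. The inequality then says exactly that $x^\star$ attains it, so $x^\star\in\cO_B(g)$, which is \eqref{eq:convex-exposure}. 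Optionally, one can note that the same computation shows $y^\star$ maximizes $\inner{g}{\cdot}$ over $P_B$, by linearity over convex combinations.

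For the final sentence, the case $g=0$ follows from the remark after \eqref{eq:oracle}: $\cO_B(0)=\X$, so the inclusion holds trivially and carries no information. I would add a brief remark on why neither uniqueness nor a margin survives in this setting. Take, for instance, $f$ constant, or $f$ affine with a face of $P_B$ orthogonal to $g$; then $\cO_B(g)$ contains non-optimal shadows. This contrast explains why \cref{thm:self-exposure} is quadratic-specific.

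There is no genuine obstacle here; the argument is essentially a single line. The only points needing care are existence of a subgradient, which follows from finiteness and convexity on all of $\R^r$, and attainment of the oracle maximum, which follows from compactness. I would state both explicitly, so that the set-valued oracle is well defined and the inclusion is meaningful.
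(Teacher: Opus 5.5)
Your argument is correct and is essentially the paper's proof: the subgradient inequality at $y^\star$, combined with global maximality of $x^\star$, gives $\inner{g}{B^{\mathsf T}x-y^\star}\le0$ for every $x\in\X$, which is the inclusion $x^\star\in\cO_B(g)$.

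One correction to your optional side remark: for $x\in\cO_B(g)$ we have $\inner{g}{B^{\mathsf T}x}=\inner{g}{y^\star}$, so the same two inequalities give $f(B^{\mathsf T}x)=f(y^\star)$. Every oracle response is therefore globally optimal; in particular, $f$ constant or affine produces no non-optimal shadow. What fails for general convex $f$ is only the uniqueness of the exposed shadow and the quadratic margin, not the optimality of the responses.
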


\begin{proof}
For any feasible shadow $y$, convexity gives
$f(y)\ge f(y^\star)+\inner{g}{y-y^\star}$, while global maximality gives $f(y)\le f(y^\star)$.  Thus $\inner{g}{y-y^\star}\le0$, which is exactly \eqref{eq:convex-exposure}.
\end{proof}

This theorem is consistent with convex combinatorial optimization \citep{onnrothblum2004convex,onn2010nonlinear}.  For $f(y)=\norm y_2^2$, one may take $g=2y^\star$; \cref{thm:self-exposure} then adds uniqueness, the exact self-generated normal, and the margin \eqref{eq:margin-identity}.

\section{Intrinsic scattering and projected-fan enumeration}
\label{sec:scattering}

Assume in this section that $P:=\conv(\X)$ is a polytope.  Linear optimization over $\X$ and $P$ has the same value, and the projected hull is $P_B=B^{\mathsf T}P$.

\begin{definition}[rank-space scattering]
\label{def:scattering}
The scattering number of $(P,B)$ is
\begin{equation}
\label{eq:scattering}
        \Sc_B(P):=\abs{\Vtx(P_B)}.
\end{equation}
For a family $\mathfrak P_n$, its rank-$r$ scattering envelope is
\[
        \Sc_{\mathfrak P}(n,r)
        :=\sup\{\Sc_B(P):P\in\mathfrak P_n,\ \rank(B)\le r\}.
\]
\end{definition}

This definition counts distinct exposed shadows, not feasible representatives.  It is invariant under duplicated feasible points, noninjective projections, and oracle tie-breaking.  By \cref{thm:self-exposure}, every optimal shadow is an exposed point of $P_B$ and hence a vertex.  Therefore enumerating one preimage of every projected vertex solves \eqref{eq:main-problem} exactly.

\subsection{Enumeration from edge directions}

A finite set $\cD\subset\R^n\setminus\{0\}$ \emph{covers the edge directions} of $P$ if every edge vector of $P$ is a nonzero scalar multiple of some $d\in\cD$.  Set
\[
       L:=\lin(P_B-P_B),\qquad d:=\dim L,
\]
For $d_0\in\cD$, let
\[
       g(d_0):=\proj_L(B^{\mathsf T}d_0),
\]
where $\proj_L$ is orthogonal projection onto $L$.  Discard the directions for which $g(d_0)=0$, identify nonzero scalar multiples, and form the resulting proper central hyperplanes
\begin{equation}
\label{eq:edge-arrangement}
       \cH_B(\cD):=\{H_g:g=g(d_0)\ne0,\ d_0\in\cD\},
       \qquad H_g:=\{z\in L:\inner{z}{g}=0\}.
\end{equation}
Let $N:=\abs{\cH_B(\cD)}$.  Projecting onto $L$ is essential when $\cD$ is a strict superset of the actual edge directions: an extraneous direction orthogonal to $L$ must not create the improper ``hyperplane'' $L$ itself.

\begin{theorem}[Projected edge-direction enumeration]
\label{thm:edge-enum}
Choose one direction $z_C$ from every full-dimensional chamber $C$ of $L\setminus\bigcup\cH_B(\cD)$, and for each $C$ solve
\[
        x_C\in\Argmax_{x\in\X}\inner{z_C}{B^{\mathsf T}x}.
\]
Then $\{B^{\mathsf T}x_C\}_C$ contains every vertex of $P_B$.  Consequently,
\begin{equation}
\label{eq:central-count}
\Sc_B(P)\le R_{\mathrm c}(N,d):=
\begin{cases}
1, & d=0\text{ or }N=0,\\[0.2em]
\displaystyle 2\sum_{j=0}^{d-1}\binom{N-1}{j}, & d\ge1,\ N\ge1.
\end{cases}
\end{equation}
For fixed $d$, the bound is $O(N^{d-1})$.
\end{theorem}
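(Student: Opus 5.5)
The plan has three steps: show that every vertex of $P_B$ has a normal cone in $L$ that is full-dimensional and cut out by the hyperplanes of $\cH_B(\cD)$; conclude that each chamber lies inside a single such cone, so its oracle call returns that vertex; then bound the number of chambers by the classical count for central arrangements.

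\textbf{Step 1 (normal cones of projected vertices).} Work inside the affine hull of $P_B$, whose direction space is $L$. Fix a vertex $v$ of $P_B$ and define
\[
N_L(v):=\{z\in L:\inner{z}{v}\ge\inner{z}{y}\ \forall y\in P_B\}.
\]
Since $P_B$ is a polytope in the affine space $v+L$ of full dimension $d$, the cone $N_L(v)$ is full-dimensional in $L$, and the cones of all vertices cover $L$.

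Next we identify the facets of $N_L(v)$. The vertex $v$ is the image of some vertex $p$ of $P$, and $P_B=B^{\mathsf T}P$. Every edge of $P_B$ at $v$ is the image of an edge of $P$. One way to see this: an edge of $P_B$ is a face exposed by some $z\in L$, and its preimage $F=\Argmax_P\inner{Bz}{\cdot}$ is a face of $P$ that maps onto it. Taking a vertex of $F$ over each endpoint and a path of edges of $F$ between them, some edge of that path must have image of positive length parallel to the given edge of $P_B$.

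It follows that each edge direction of $P_B$ is parallel to $B^{\mathsf T}d_0$ for some $d_0\in\cD$. Because the edge lies in $L$, it is also parallel to $g(d_0)=\proj_L(B^{\mathsf T}d_0)$, which is nonzero. The cone $N_L(v)$ is the set of $z\in L$ with $\inner{z}{w-v}\le0$ for every edge neighbour $w$ of $v$. Hence each facet of $N_L(v)$ lies in some hyperplane $H_g\in\cH_B(\cD)$.

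\textbf{Step 2 (chambers select vertices).} Let $C$ be a chamber. Since $C$ is connected and avoids every facet hyperplane of every vertex cone, and the open interiors of the full-dimensional cones $N_L(v)$ tile $L$ up to the union of those hyperplanes, $C$ lies in the interior of a single cone $N_L(v)$.

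For $z_C\in\operatorname{int}N_L(v)$, the point $v$ is the unique maximizer of $\inner{z_C}{\cdot}$ over $P_B$. The oracle value over $\X$ equals the value over $P$, so $B^{\mathsf T}x_C=v$.

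Conversely, each interior $\operatorname{int}N_L(v)$ is open and nonempty. Its complement-of-arrangement part is therefore nonempty, since the hyperplanes have measure zero, so it contains a chamber. This shows every vertex is hit. It also gives an injection from vertices into chambers, which yields $\Sc_B(P)\le$ (number of chambers).

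Two cases remain. A direction $z_C\in L$ suffices because components of $z$ orthogonal to $L$ only add a constant on $P_B$. If $d=0$, then $P_B$ is a point; if $N=0$, there are no edges in $L$, which forces $d=0$ (a $d\ge1$ polytope has edges). Either way the count is $1$.

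\textbf{Step 3 (counting).} Invoke the classical bound for a central arrangement of $N$ hyperplanes in $\R^d$: the number of chambers is at most $2\sum_{j=0}^{d-1}\binom{N-1}{j}$, with equality in general position. The proof is the standard deletion–restriction induction. Adding one hyperplane $H$ increases the number of chambers by the number of chambers of the restricted arrangement in $H\cong\R^{d-1}$, which has at most $N-1$ central hyperplanes. This gives the recursion $R(N,d)\le R(N-1,d)+R(N-1,d-1)$, with base cases $R(N,1)=2$ for $N\ge1$ and $R(1,d)=2$. For fixed $d$ the sum is $O(N^{d-1})$.

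I expect the main obstacle to be Step 1's edge-lifting claim, that every edge of the projection comes from an edge of $P$ whose projected direction is parallel to it. This must be argued carefully through the face $F$ and an edge path inside $F$, because the projection may be highly noninjective.
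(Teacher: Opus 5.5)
Your proof is correct, but it argues on the other side of the projection from the paper.

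The paper works inside $P$. For a fixed chamber direction $z$, it takes the maximizing face $F$ of $P$ and uses connectivity of the edge graph of $F$ to show that $B^{\mathsf T}F$ is a single point: an edge of $F$ with nonzero image would put $z$ on some $H_g$. It then gets constancy on the chamber from local constancy plus connectedness. It gets surjectivity from the full-dimensional relative normal cone of each vertex.

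You work inside $P_B$. You use the same edge-path device, but in the preimage face of an \emph{edge} of $P_B$, to show that every edge direction of $P_B$ is a retained $g(d_0)$. Hence every facet of every vertex cone $N_L(v)$ lies in a hyperplane of $\cH_B(\cD)$; that is, the arrangement refines the normal fan of $P_B$ in $L$. You then assign chambers to vertices using the disjoint open cone interiors that cover $L$ off the arrangement. The paper's appendix mentions this viewpoint only in passing ("no projected edge can become tied without crossing an arrangement hyperplane").

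What your route buys:
\begin{itemize}
\item a cleaner structural statement, which is exactly the projected-normal-fan picture;
\item a transparent injection from vertices to chambers;
\item the observation that $N=0$ forces $d=0$.
\end{itemize}

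What it costs is more polyhedral background:
\begin{itemize}
\item the edge-neighbour description of vertex normal cones;
\item that the boundary of a full-dimensional cone is the union of its facets, and that vertex cones cover $L$ with disjoint interiors;
\item the extra step of locating vertices of $F$ over both endpoints of the edge.
\end{itemize}

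The paper's argument is more local. It needs only the edge graph of the single maximizing face at a given direction, and no description of the normal fan of $P_B$.
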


\begin{proof}
Fix $z$ in a chamber and let $F$ be the maximizing face of $P$.  If $B^{\mathsf T}F$ contained two points, the connected edge graph of $F$ would contain an edge $[u,v]$ with $B^{\mathsf T}(v-u)\ne0$.  Because $u$ and $v$ both maximize the linear functional,
$\inner{z}{B^{\mathsf T}(v-u)}=0$.  The edge direction is covered by some $d_0\in\cD$.  Since $B^{\mathsf T}(v-u)\in L$, it is a nonzero scalar multiple of $g(d_0)$, so $z$ lies on a hyperplane of \eqref{eq:edge-arrangement}, a contradiction.  Thus $B^{\mathsf T}F$ is a singleton and every feasible-set oracle response at $z$ has the same projected vertex.

The projected response is locally constant inside the chamber: at any $z$, its projected maximizer is unique among the finitely many vertices of $P_B$, so the strict inequalities persist in a neighborhood.  Because a chamber is connected, the projected response is constant throughout it.  Conversely, every vertex $y$ of $P_B$ has a full-dimensional relative normal cone in $L$.  The interior of that cone cannot be covered by finitely many proper hyperplanes, so it meets a chamber, whose responses therefore project to $y$.  The chamber bound is the standard maximum number of regions of a central arrangement \citep{zaslavsky1975arrangements,matousek2002discrete}.  Appendix~\ref{secS:edge} records the same argument with the lower-dimensional and degeneracy conventions made explicit.
\end{proof}

\begin{remark}[Computational model]
The theorem separates geometry from implementation.  If a linear oracle over $\X$ is polynomial-time and the arrangement can be traversed or its chambers output in polynomial time per chamber, the algorithm is output-sensitive in $R_{\mathrm c}(N,d)$.  Standard arrangement methods supply such enumeration for fixed $d$ \citep{edelsbrunner1987algorithms,matousek2002discrete}.  Oracle edge-skeleton methods provide a stronger total-polynomial framework in suitable encodings \citep{emiris2016edgeskeleton}.  Projected-normal-fan methods can be preferable when they avoid listing an overly large direction cover \citep{scottgeunes2025normalfan}.  The oracle-call and arrangement bounds here are stated in the real-arithmetic/linear-oracle model; rational bit complexity depends on the representation of chamber directions and on the underlying oracle.
\end{remark}

\begin{corollary}[Exact fixed-dimensional optimization]
\label{cor:fixed-dim}
Under the assumptions of \cref{thm:edge-enum}, evaluate $\norm{B^{\mathsf T}x_C}_2^2$ for the distinct projected responses and return the largest.  This solves \eqref{eq:main-problem} exactly using at most $R_{\mathrm c}(N,d)$ linear-oracle calls.
\end{corollary}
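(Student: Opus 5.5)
The corollary follows by chaining \cref{thm:self-exposure} with \cref{thm:edge-enum}. The plan is to show that the finite list of projected responses $\{B^{\mathsf T}x_C\}_C$ contains an optimal shadow. Once that is established, returning the response of largest norm attains the optimal value, and the count of oracle calls equals the number of chambers. The oracle-call bound then comes directly from the chamber bound \eqref{eq:central-count}.

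\textbf{Step 1: an optimal shadow is a vertex of $P_B$.}
Since $\X$ is compact and $q_Q$ is continuous, the problem \eqref{eq:main-problem} has a maximizer $x^\star$; set $y^\star=B^{\mathsf T}x^\star$. I would split into two cases.
\begin{itemize}
\item If $y^\star=0$, then by the zero-optimum remark every feasible point is optimal. Any single oracle response is therefore optimal, and at least one chamber exists, since $R_{\mathrm c}\ge1$ and the empty arrangement still has $L$ itself as a chamber.
\item Otherwise, \cref{thm:self-exposure} shows that $y^\star$ is the unique maximizer over $P_B$ of the linear functional $\inner{y^\star}{\cdot}$. Hence $y^\star$ is an exposed point of the polytope $P_B$, and so it is a vertex of $P_B$.
\end{itemize}

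\textbf{Step 2: some oracle response attains the optimal value.}
By \cref{thm:edge-enum}, some chamber $C$ has $B^{\mathsf T}x_C=y^\star$. The corresponding $x_C$ is feasible and satisfies $q_Q(x_C)=\norm{y^\star}_2^2$, which is the optimal value.

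For the returned point, every response $x_C$ is feasible, so $q_Q(x_C)\le\norm{y^\star}_2^2$. The maximum of $q_Q$ over the responses therefore equals $\norm{y^\star}_2^2$, and the returned point is globally optimal. Restricting the comparison to distinct projected responses changes nothing, because equal shadows give equal objective values.

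\textbf{Step 3: counting oracle calls.}
The algorithm makes one call per chamber. The number of chambers of a central arrangement of $N$ hyperplanes in the $d$-dimensional space $L$ is at most $R_{\mathrm c}(N,d)$, by the same standard bound cited in the proof of \cref{thm:edge-enum}.

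\textbf{Main obstacle.}
The only delicate point is the bookkeeping in Step 3. The theorem bounds $\Sc_B(P)$, which counts vertices, whereas the corollary needs a bound on the number of chambers, since that is the number of oracle calls. The chamber-count bound is what the proof of \cref{thm:edge-enum} actually invokes, so I would cite it directly rather than $\Sc_B(P)$. The degenerate cases $d=0$ or $N=0$ need separate attention: there $L\setminus\bigcup\cH$ is a single chamber, so exactly one call is made, consistent with $R_{\mathrm c}=1$.
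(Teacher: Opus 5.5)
Your proposal is correct and follows the paper's own route. The paper justifies the corollary by observing, just after \cref{def:scattering}, that self-exposure makes every optimal shadow an exposed point, hence a vertex, of $P_B$; it then invokes \cref{thm:edge-enum} to guarantee that some chamber response projects onto that vertex. You are right that the oracle-call bound must come from the number of chambers, not from $\Sc_B(P)$; this is the region bound the proof of \cref{thm:edge-enum} actually uses. One small fix: a chamber exists because $L$ is not a finite union of proper hyperplanes, not because of the upper bound $R_{\mathrm c}\ge1$.
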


\section{Structured discrete families}
\label{sec:families}

\Cref{tab:families} summarizes the resulting bounds.  The exponent depends on the effective projected dimension $d\le r$ in real problems and $d\le2r$ after complex realification.  The complex derivation and phase-boundary bookkeeping are in Appendix~\ref{secS:complex}.

\begin{table}[t]
\centering
\caption{Representative rank-collapse bounds.  $N$ is the number of distinct projected edge hyperplanes and $d$ is the effective real shadow dimension.  Degenerate instances can only reduce the chamber count.}
\label{tab:families}
\small
\begin{tabularx}{\linewidth}{@{}l l l X@{}}
\toprule
Feasible family & Linear oracle & Direction bound & Scattering / structures \\
\midrule
$\{\pm1\}^n$ & coordinate signs & $N\le n$ & $R_{\mathrm c}(n,d)$; sharp generically \\
$m$-phase vectors & phase quantization & $N\le mn$ & $R_{\mathrm c}(mn,d)$ \\
Top-$k$ subsets & $k$ largest scores & $N\le \binom{n}{2}$ & $O(n^{2d-2})$ \\
Matroid bases & greedy algorithm & $N\le \binom{n}{2}$ & $O(n^{2d-2})$ \\
Sparse PCA supports & $k$ largest $\abs{(Bz)_i}$ & $N\le n(n-1)$ & $O(n^{2r-2})$ active supports \\
\bottomrule
\end{tabularx}
\end{table}

\subsection{Binary vectors and zonotopes}

Let $\X=\{\pm1\}^n$.  Its convex hull is the cube, whose edge directions are the coordinate vectors.  The projected hull is the zonotope
\begin{equation}
\label{eq:zonotope}
       P_B=B^{\mathsf T}[-1,1]^n
       =\sum_{i=1}^n[-b_i,b_i],
\end{equation}
where $b_i^{\mathsf T}$ is row $i$ of $B$.

\begin{corollary}[Binary scattering]
\label{cor:binary}
Let $m:=\abs{\{i:b_i\ne0\}}$ and $d=\dim P_B$.  If $d=0$, then $P_B$ is a singleton and $\Sc_B([-1,1]^n)=1$.  If $d\ge1$, then
\begin{equation}
\label{eq:binary-bound}
       \Sc_B([-1,1]^n)\le2\sum_{j=0}^{d-1}\binom{m-1}{j}
       \le2\sum_{j=0}^{d-1}\binom{n-1}{j}=O(n^{d-1}).
\end{equation}
If the $m$ nonzero generators are in general linear position in $L$, equality holds in the first bound.  Hence binary positive-semidefinite quadratic maximization is polynomial-time solvable for fixed rank.
\end{corollary}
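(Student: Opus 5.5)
The plan is to specialize \cref{thm:edge-enum} to the cube $P=[-1,1]^n$ with the direction cover $\cD=\{e_1,\dots,e_n\}$, and then prove the equality claim directly from zonotope geometry.

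\emph{Trivial case.} If $d=0$, then $P_B$ is a single point, since $P_B$ is the Minkowski sum of the segments $[-b_i,b_i]$. In that case $\Sc_B=1$.

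\emph{Upper bound for $d\ge1$.} Every edge of the cube is parallel to some $e_i$, so $\cD$ covers the edge directions. Since $B^{\mathsf T}e_i=b_i$, the arrangement \eqref{eq:edge-arrangement} is generated by $g(e_i)=\proj_L(b_i)$. We also have $L=\lin\{b_i\}$, because the zonotope $\sum_i[-b_i,b_i]$ has affine span equal to $\lin\{b_i\}$; hence $\proj_L(b_i)=b_i$. The directions with $b_i=0$ are discarded, and identifying parallel generators can only merge hyperplanes, so $N\le m\le n$. The chamber count $R_{\mathrm c}(N,d)=2\sum_{j<d}\binom{N-1}{j}$ is monotone in $N$, which gives both inequalities in \eqref{eq:binary-bound}. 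Since $d$ is fixed, the second sum is $O(n^{d-1})$. Polynomial solvability then follows from \cref{cor:fixed-dim}. The linear oracle over $\{\pm1\}^n$ is coordinatewise, $x_i=\operatorname{sign}(b_i^{\mathsf T}z)$, and it is well defined inside chambers because no $b_i^{\mathsf T}z$ vanishes there. The fixed-$d$ arrangement enumeration supplies the $O(n^{d-1})$ chamber representatives.

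\emph{Equality under general position.} This is the step I expect to be the real content, and it needs two facts.

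First, when the $m$ nonzero generators are in general linear position in $L$ (every $d$ of them linearly independent), no two are parallel, so $N=m$. Such a central arrangement in $\R^d$ attains the maximum region count $2\sum_{j<d}\binom{m-1}{j}$. This is the classical count for generic central arrangements: by Zaslavsky's theorem every intersection of $k<d$ hyperplanes has codimension $k$, so the characteristic polynomial has the generic form. I would cite this the same way \cref{thm:edge-enum} cites the upper bound.

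Second, distinct chambers must yield distinct vertices, so that the chamber count is exactly $\Sc_B$ and not merely an upper bound on it. For $z$ in a chamber, the response shadow is $y(z)=\sum_i\operatorname{sign}(b_i^{\mathsf T}z)\,b_i$. Suppose two different chambers gave the same shadow. Their sign vectors differ on a nonempty set $S$, and equality of shadows gives $\sum_{i\in S}\sigma_i b_i=0$ for some nonzero signs $\sigma_i$. Each $b_i$ is nonzero, so this sum cannot have a single term. Ruling out longer vanishing signed sums in general is the subtle part.

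The cleanest route I would take avoids linear dependence and uses the normal cones instead. The response shadow in a chamber $C$ is a vertex $y$, and $C$ is contained in the interior of the normal cone of $P_B$ at $y$. For a zonotope, that normal cone is exactly the closure of one chamber of the generator arrangement, namely the set $\{z:\operatorname{sign}(b_i^{\mathsf T}z)=s_i\}$. Hence distinct chambers have distinct normal cones and therefore distinct vertices. This gives $\Sc_B=$ (number of chambers), and equality follows.
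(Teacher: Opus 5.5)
Your proposal is correct and follows the route the paper implicitly takes. You specialize \cref{thm:edge-enum} to the cube with the coordinate cover, so the arrangement is $\{b_i^\perp\}$ with $N\le m\le n$, and you obtain equality from the classical facts that a zonotope's normal fan is its generator-arrangement fan and that a generic central arrangement attains $R_{\mathrm c}(m,d)$. Two small remarks. For $d=1$ all nonzero generators are parallel, so $N=1$ rather than $m$; this is harmless, since the bound equals $2$ for every $N\ge1$. Also, the step you called subtle is immediate: for $z$ in the first chamber and $\sigma_i=\operatorname{sign}(b_i^{\mathsf T}z)$, one has $\inner{z}{\sum_{i\in S}\sigma_ib_i}=\sum_{i\in S}\abs{b_i^{\mathsf T}z}>0$.
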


This is the classical zonotope count underlying the algorithm of \citet{ferrez2005zonotope}; the hyperspherical binary algorithm of \citet{karystinos2010binary} realizes the same fixed-rank phenomenon through sign regions.

For $d=2$, exactly $2m$ projected vertices occur in general position.  Because the objective is even, one representative from each antipodal pair is enough.

\begin{proposition}[Rank-two angular sweep]
\label{prop:rank2-sweep}
For $B\in\R^{n\times2}$ with nonzero, pairwise nonparallel rows, binary problem \eqref{eq:main-problem} can be solved in $O(n\log n)$ time and $O(n)$ memory.  After sorting the $n$ event angles at which $b_i^{\mathsf T}z=0$, a sweep across one half-circle flips one sign at each event and updates the shadow in $O(1)$ time.
\end{proposition}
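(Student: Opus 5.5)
The plan is to reduce the problem, via \cref{thm:self-exposure}, to evaluating the linear oracle for $\{\pm1\}^n$ at directions $z=(\cos\theta,\sin\theta)^{\mathsf T}$ with $\theta$ ranging over a half-circle. For a fixed $z$ the oracle is $x_i=\operatorname{sign}(b_i^{\mathsf T}z)$, unique whenever no $b_i^{\mathsf T}z$ vanishes. Since the rows are nonzero and pairwise nonparallel, each line $\{z:b_i^{\mathsf T}z=0\}$ is distinct, and the event angles $\theta_i\in[0,\pi)$ are pairwise distinct. So the $n$ central lines split the circle into exactly $2n$ open arcs, and \cref{thm:edge-enum} and \cref{cor:binary} say these arcs enumerate all $2n$ vertices of $P_B$; the case $d=1$ cannot occur because two nonparallel rows span $\R^2$. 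The objective is even, and $z\mapsto -z$ maps the arc responses to their negations with the same shadow norm. Hence it suffices to visit the $n$ arcs meeting the half-circle $[\theta_0,\theta_0+\pi)$, for a starting angle $\theta_0$ that is not an event.

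The algorithm proceeds in four steps.
\begin{enumerate}
\item Compute each event angle $\theta_i$ (the angle of $b_i^{\perp}$, reduced mod $\pi$) in $O(1)$ time, and sort them in $O(n\log n)$ time.
\item Fix $\theta_0$ strictly before the first sorted event, for instance just below $0$, or simply start in the arc preceding the smallest event. Form $x^{(0)}_i=\operatorname{sign}(b_i^{\mathsf T}z_0)$ and $y^{(0)}=\sum_i x^{(0)}_i b_i$ in $O(n)$ time.
\item Sweep through the events in sorted order. Crossing $\theta_i$ changes the sign of $b_i^{\mathsf T}z$ and of no other row, because the events are distinct. The new response therefore differs from the old one only in coordinate $i$, and the shadow update is $y\leftarrow y-2x_ib_i$ followed by $x_i\leftarrow -x_i$, which takes $O(1)$ time.
\item Track $\norm{y}_2^2$ and the index of the best arc. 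To keep memory at $O(n)$ and still output an optimizer, store only the best arc's position. At the end, either rebuild its sign vector in $O(n)$ time from one oracle call at an interior direction of that arc, or replay the flips.
\end{enumerate}
The total cost is $O(n\log n)$ time and $O(n)$ memory. Correctness follows because the $n$ visited responses, together with their negations, realize every vertex of $P_B$, and an optimal shadow is a vertex by \cref{thm:self-exposure}.

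The main obstacle I expect is the bookkeeping showing that one half-circle of $n$ arcs covers a representative of every antipodal pair of vertices with nothing missed at the wrap-around. The approach I will try first: the arcs on $[\theta_0,\theta_0+\pi)$ number exactly $n$, since each line is crossed exactly once in a half-turn. The arc containing $z$ and the one containing $-z$ have opposite responses, because $\operatorname{sign}(b_i^{\mathsf T}(-z))=-\operatorname{sign}(b_i^{\mathsf T}z)$. So the $2n$ arcs pair up antipodally, and the half-circle contains exactly one arc from each pair. A secondary point is that the sign convention does not matter at an event itself, since the sweep evaluates only interior points of arcs. I will also check that rows with $b_i=0$ are excluded by hypothesis, so no coordinate is left undetermined.
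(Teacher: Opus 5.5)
Your proposal is correct and follows essentially the paper's argument: the sign oracle on open angular cells, single-coordinate flips with the $O(1)$ update $y\leftarrow y-2x_ib_i$ after sorting, coverage of every projected vertex via \cref{thm:edge-enum}, and antipodal symmetry to restrict to a half-circle (your care about recovering the optimizer within $O(n)$ memory is a useful detail the paper leaves implicit). One harmless counting slip: starting from a non-event angle $\theta_0$, the half-turn crosses each of the $n$ lines once and so visits $n+1$ cells, the first and last antipodal, rather than ``exactly $n$ arcs, one from each antipodal pair''; this changes neither correctness nor the $O(n\log n)$ time and $O(n)$ memory bounds.
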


\begin{proof}
Within an open angular cell, the linear oracle is $x_i=\operatorname{sign}(b_i^{\mathsf T}z)$.  Crossing the event for row $i$ changes only $x_i$, so $y=B^{\mathsf T}x$ updates as $y\leftarrow y-2x_i b_i$.  Sorting costs $O(n\log n)$ and the sweep costs $O(n)$.  Every projected vertex has a normal cone containing one open cell by \cref{thm:edge-enum}; antipodal cells have equal quadratic value.  Simultaneous events are handled by grouping or symbolic perturbation, as detailed in Appendix~\ref{secS:degeneracy}.
\end{proof}

\subsection{Finite-phase alphabets}

Let $m\ge3$, $\Omega_m=\{e^{2\pi\mathrm i q/m}:q=0,\ldots,m-1\}$, and $\X=\Omega_m^n$; the case $m=2$ is the binary family above.  Realifying $\C^r$ gives an effective dimension $d\le2r$.  The convex hull is a product of regular $m$-gons; each coordinate contributes at most $m$ edge directions.

\begin{corollary}[Finite-phase scattering]
\label{cor:phase}
For an $m$-phase alphabet,
\begin{equation}
       \Sc_B(\conv(\Omega_m^n))\le R_{\mathrm c}(mn,d)=O((mn)^{d-1}).
\end{equation}
For fixed $m$ and complex rank $r$, this is $O(n^{2r-1})$.  For complex rank one, an angular sweep uses at most $mn$ phase events.
\end{corollary}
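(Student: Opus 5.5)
The plan is to derive \cref{cor:phase} from \cref{thm:edge-enum} applied to the realified problem. First I realify: for $x\in\C^n$ and $B\in\C^{n\times r}$ I write the objective $\norm{B^{\mathsf H}x}_2^2$ as $\norm{\widetilde B^{\mathsf T}\widetilde x}_2^2$, where $\widetilde x=(\Re x,\Im x)\in\R^{2n}$ and $\widetilde B\in\R^{2n\times2r}$ is the standard real embedding. This map is a real-linear isometry, so the shadow set lives in $\R^{2r}$ and its affine span has dimension $d\le2r$. The details can be taken from Appendix~\ref{secS:complex}. The feasible set becomes $\prod_{i=1}^n\Omega_m\subset(\R^2)^n$, and its convex hull $P$ is the product of $n$ regular $m$-gons, each lying in its own coordinate plane $\R^2_i$.

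Next I produce an edge-direction cover $\cD$ with $\abs{\cD}\le mn$. A face of a product polytope is a product of faces of the factors, so an edge is an edge of one factor times vertices of the others. Every edge vector of $P$ is therefore an edge vector of a single $m$-gon embedded in its coordinate plane. A regular $m$-gon has $m$ edges, giving at most $m$ directions per coordinate; for even $m$ opposite edges are parallel, so only $m/2$ classes occur. Hence $\cD$ has at most $mn$ elements. After projecting to $L$, discarding zero images and identifying parallel ones, we get $N\le mn$.

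\Cref{thm:edge-enum} then gives $\Sc_B(\conv(\Omega_m^n))\le R_{\mathrm c}(N,d)$. The function $R_{\mathrm c}$ is monotone in $N$, since each binomial coefficient $\binom{N-1}{j}$ is nondecreasing in $N$, and the cases $N=0$ and $d=0$ give the value $1$. So the bound is at most $R_{\mathrm c}(mn,d)=O((mn)^{d-1})$. With $m$ fixed and $d\le2r$ this becomes $O(n^{2r-1})$.

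For complex rank one we have $d\le2$, so the arrangement consists of lines through the origin in $L\cong\R^2$. Its chambers are the angular sectors between at most $mn$ boundary rays on the circle. Inside one sector the oracle quantizes each phase of $\overline{(Bz)_i}$-type products to the nearest element of $\Omega_m$. Crossing a boundary changes exactly one coordinate, or several coordinates in degenerate cases, which are handled as in Appendix~\ref{secS:degeneracy}, and each change updates the shadow in $O(1)$ time. The sweep thus handles each event once, mirroring \cref{prop:rank2-sweep}, and visits at most $mn$ phase events.

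The main obstacle is bookkeeping rather than depth. Two points need care: the realified oracle must coincide with phase quantization, and edge directions must be counted correctly. The edges of the $m$-gon are the differences between consecutive roots of unity, and per coordinate there are $m$ of them up to sign. I would verify the product-face lemma explicitly, since that is what guarantees no edge mixes coordinates.
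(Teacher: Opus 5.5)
Your derivation of the main bound follows the paper's own argument in Appendix~\ref{secS:complex}. You realify, observe that $\conv(\Omega_m^n)$ is a product of regular $m$-gons whose edges move a single coordinate, cover the edges with at most $mn$ directions, and apply \cref{thm:edge-enum} with $d\le 2r$. Your product-face lemma and the monotonicity of $R_{\mathrm c}$ in $N$ just make explicit steps the paper leaves implicit.

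The rank-one clause, however, rests on a false intermediate step.

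\emph{Rays versus lines.} The $N\le mn$ edge hyperplanes are lines through the origin of $L\cong\R^2$, so they cut the circle at up to $2N$ rays, not $N$. Write $\omega=e^{2\pi\mathrm i/m}$ and let $b_i$ be the $i$th entry of $B$. For the edge $[\omega^q,\omega^{q+1}]$ of coordinate $i$, the line is the set of $z$ with $\arg(b_iz)\in\{(2q+1)\pi/m,\ (2q+1)\pi/m+\pi\}$.

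\emph{Which rays are genuine.} The first ray is a genuine switch of the nearest-root quantizer, since $\omega^q$ and $\omega^{q+1}$ tie as maximizers there. For even $m$, the second ray is the genuine switch of the opposite, parallel edge. For odd $m$, the second ray points at a vertex $\omega^{q'}$, where the quantizer is locally constant.

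\emph{Consequence.} For odd $m$, two of your claims fail: that the sectors are bounded by at most $mn$ rays, and that crossing a boundary changes exactly one coordinate. Already for $m=3$, $n=1$, $b_1=1$ there are three edge lines and six rays, but switches occur only at $\arg z\in\{\pi/3,\pi,5\pi/3\}$. The arrangement bound by itself gives only $R_{\mathrm c}(mn,2)=2mn$ sectors.

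\emph{Fix.} Your final count of at most $mn$ phase events is still true, but you must obtain it as the paper does, by counting quantizer switches directly. Coordinate $i$ changes exactly when $\arg(b_iz)$ crosses one of the $m$ Voronoi boundary angles $(2q+1)\pi/m$ of $\Omega_m$. Hence a full turn of $z$ produces at most $mn$ phase events, independently of the arrangement.
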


The result recovers the polynomial fixed-rank behavior studied in \citet{kyrillidis2014mpsk}, while placing it in the same projected-edge framework as binary vectors.

\subsection{Cardinality constraints and matroids}

Let
\[
      \X_{n,k}=\{x\in\{0,1\}^n:\mathbf1^{\mathsf T}x=k\},
      \qquad 0<k<n.
\]
The linear oracle selects the $k$ largest coordinates of $Bz$, and the hypersimplex $\conv(\X_{n,k})$ has edge directions $e_i-e_j$.

\begin{corollary}[Top-$k$ subsets]
\label{cor:topk}
With $d=\dim P_B$,
\begin{equation}
\Sc_B(\conv(\X_{n,k}))
\le R_{\mathrm c}\!\left(\binom{n}{2},d\right)
=O(n^{2d-2}).
\end{equation}
The cases $k=0,n$ have scattering one.
\end{corollary}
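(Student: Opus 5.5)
The plan is to apply \cref{thm:edge-enum} directly with $P=\conv(\X_{n,k})$ (the hypersimplex) and the direction cover $\cD=\{e_i-e_j:1\le i<j\le n\}$. Two facts are needed. First, $\cD$ covers the edge directions of $P$. Second, the number $N$ of distinct proper hyperplanes in $\cH_B(\cD)$ is at most $\binom n2$. The bound $\Sc_B(P)\le R_{\mathrm c}(N,d)$ then follows. Since $R_{\mathrm c}$ is nondecreasing in $N$ (each binomial $\binom{N-1}{j}$ is monotone in $N$), we may replace $N$ by $\binom n2$.

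For the edge directions, I would use the standard description of hypersimplex edges. Two vertices $\mathbf 1_S,\mathbf 1_T$ with $|S|=|T|=k$ are adjacent exactly when $|S\triangle T|=2$, so every edge vector is $\pm(e_i-e_j)$. To keep the argument self-contained, I would verify this via a linear functional. Suppose $|S\triangle T|\ge4$. Then the midpoint of $\mathbf 1_S$ and $\mathbf 1_T$ is also the midpoint of $\mathbf 1_{S'}$ and $\mathbf 1_{T'}$, where $S'$ and $T'$ are obtained by swapping one element of $S\setminus T$ with one element of $T\setminus S$. Hence the segment $[\mathbf 1_S,\mathbf 1_T]$ is not a face. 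Because vertices are $0/1$ points, every edge joins two vertices of this form. This is the only step needing real content, and I expect it to be the main (mild) obstacle. Citing the classical hypersimplex edge structure would also suffice.

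The direction count is immediate. The vectors $e_i-e_j$ with $i<j$ number $\binom n2$, and $e_j-e_i$ is a scalar multiple of $e_i-e_j$. After projecting to $L$, discarding zero images, and identifying parallel hyperplanes, the count can only drop, so $N\le\binom n2$. Then for fixed $d\ge1$,
\[
R_{\mathrm c}\!\left(\tbinom n2,d\right)=2\sum_{j=0}^{d-1}\binom{\binom n2-1}{j}=O\!\left(\tbinom n2^{d-1}\right)=O(n^{2d-2}).
\]
When $d=0$ or $N=0$, the value is $1$, consistent with the bound.

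For the degenerate cases $k=0$ and $k=n$, $\X_{n,k}$ is the single point $\mathbf 0$ or $\mathbf 1$. Then $P_B$ is a singleton and has exactly one vertex, so the scattering is one.
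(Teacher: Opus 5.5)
Your proposal is correct and follows the paper's argument: apply \cref{thm:edge-enum} to the hypersimplex with the direction cover $\{e_i-e_j: i<j\}$, so that $N\le\binom n2$, use monotonicity of $R_{\mathrm c}$ in $N$, and observe that $k=0,n$ give a singleton projected hull. The only addition is your midpoint-swap check that every hypersimplex edge is parallel to some $e_i-e_j$, which correctly replaces the classical fact the paper simply quotes with a self-contained argument.
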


The same direction set governs every matroid base polytope: each edge is parallel to $e_i-e_j$, and a maximum-weight base is obtained by the greedy algorithm \citep{edmonds1971matroids,schrijver2003combinatorial}.

\begin{corollary}[Matroid bases]
\label{cor:matroid}
Let $\X$ be the incidence vectors of the bases of a matroid on $n$ elements.  Then fixed-rank positive-semidefinite quadratic maximization over $\X$ is solvable using at most
$R_{\mathrm c}(\binom{n}{2},d)$ greedy-oracle calls.  In particular, this applies to spanning trees through the graphic matroid.
\end{corollary}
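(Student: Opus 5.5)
The plan is to reduce \cref{cor:matroid} to \cref{thm:edge-enum} and \cref{cor:fixed-dim} with $P$ taken to be the matroid base polytope. Three facts must be supplied.

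\begin{enumerate}
\item $P=\conv(\X)$ is a polytope, since $\X$ is a finite nonempty set of $0/1$ vectors: every matroid has at least one base.
\item The finite set $\cD=\{e_i-e_j:1\le i<j\le n\}$ covers the edge directions of $P$.
\item A linear oracle over $\X$ exists and is implementable. For a direction $z\in L$, the rank-space functional is $\inner{z}{B^{\mathsf T}x}=\inner{Bz}{x}$, so the oracle maximizes the weight $w=Bz$ over bases. The greedy algorithm does this: sort the elements by weight and add each element whenever independence is preserved. By the Rado--Edmonds theorem \citep{edmonds1971matroids,schrijver2003combinatorial}, this returns a maximum-weight base. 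Each chamber call therefore costs one sort plus $n$ independence-oracle queries.
\end{enumerate}

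The main step is (ii). Let $u,v$ be adjacent vertices, i.e.\ incidence vectors of bases $S,T$ joined by an edge. The goal is to show $|S\triangle T|=2$, so that $v-u=e_j-e_i$. I would argue by contradiction. Suppose $|S\setminus T|\ge2$ and pick $i\in S\setminus T$. The symmetric exchange property gives $j\in T\setminus S$ such that both $S-i+j$ and $T-j+i$ are bases. Their incidence vectors satisfy
\[
\chi_{S-i+j}+\chi_{T-j+i}=\chi_S+\chi_T .
\]
Both new bases differ from $S$ and from $T$ because $|S\triangle T|\ge4$. So the midpoint of $[u,v]$ is also the midpoint of two other vertices of $P$. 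Any linear functional exposing the edge $[u,v]$ attains its maximum at that midpoint, so it attains the same value at both new vertices, and they lie on the edge. This is impossible, since an edge contains only two vertices. Alternatively, this is the classical characterization of base-polytope edges due to Gel'fand--Goresky--MacPherson--Serganova, which I would cite. This step is the only nonformal ingredient, and the symmetric-exchange form of the basis axiom is what makes it work.

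Given (i)--(iii), \cref{thm:edge-enum} applies with $N\le\binom n2$, because the hyperplanes $H_{g(d_0)}$ are indexed by at most $\binom n2$ directions after identifying $\pm(e_i-e_j)$. It bounds the number of chambers, and hence the number of greedy calls, by $R_{\mathrm c}(\binom n2,d)$. Every vertex of $P_B$ is returned. \cref{thm:self-exposure} ensures that an optimal shadow is among these vertices, and \cref{cor:fixed-dim} then gives exactness. For fixed $d\le r$, this count is $O(n^{2d-2})$, which is polynomial.

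For spanning trees, I would specialize to the graphic matroid, whose bases are the spanning trees of a connected graph. There greedy is Kruskal's algorithm, and $n$ is the number of edges.
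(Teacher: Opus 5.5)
Your proposal is correct and follows the paper's route: the corollary is an immediate application of \cref{thm:edge-enum} and \cref{cor:fixed-dim}, with $\cD=\{e_i-e_j\}$ giving $N\le\binom n2$ and the greedy algorithm serving as the linear oracle for the weight $w=Bz$. The only difference is that you prove the base-polytope edge-direction fact yourself via symmetric exchange, whereas the paper just states it with a citation to Edmonds and Schrijver.
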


This corollary is consistent with the broader convex matroid framework of \citet{onn2003matroid}; its role here is to show that the rank-collapse principle interfaces directly with a standard combinatorial oracle.

\begin{figure}[t]
\centering
\begin{subfigure}[t]{0.49\linewidth}
\centering
\includegraphics[width=\linewidth]{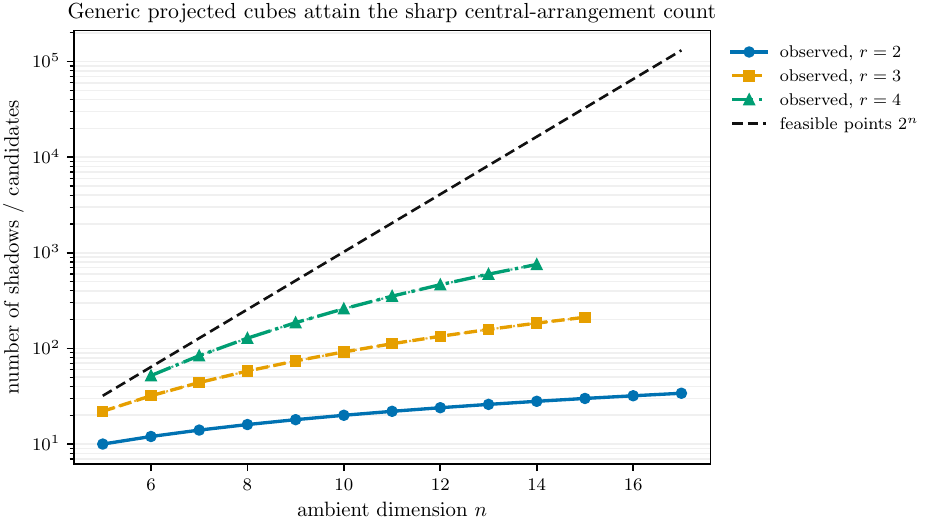}
\caption{Sharp projected-cube counts.}
\label{fig:binary-scattering}
\end{subfigure}\hfill
\begin{subfigure}[t]{0.49\linewidth}
\centering
\includegraphics[width=\linewidth]{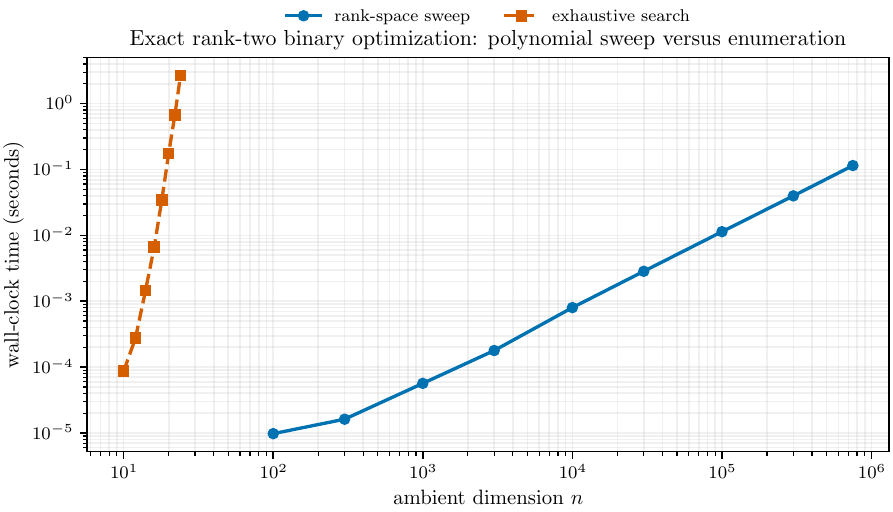}
\caption{Exact rank-two runtime scaling.}
\label{fig:binary-runtime}
\end{subfigure}
\caption{Binary rank collapse.  Left: all 132 generic Gaussian computations match the central-arrangement formula, while the feasible set grows as $2^n$.  Right: the angular sweep remains subsecond through $n=750{,}000$ on the recorded CPU, whereas exhaustive enumeration reaches seconds at $n=24$.  Details are in \cref{sec:experiments}.}
\label{fig:binary-combined}
\end{figure}

\section{Active-structure collapse and sparse PCA}
\label{sec:active}

Finite projected vertices are not the only useful notion of collapse.  Suppose
\begin{equation}
\label{eq:union-structures}
        \X=\bigcup_{\sigma\in\Sigma}\X_\sigma,
\end{equation}
where $\Sigma$ is finite and every $\X_\sigma$ is compact.  Define support functions
\[
        h_\sigma(z):=\max_{x\in\X_\sigma}\inner{z}{B^{\mathsf T}x},
        \qquad h(z):=\max_{\sigma\in\Sigma}h_\sigma(z).
\]
Let $\mathcal R\subset\R^r$ be dense, typically the complement of a finite arrangement, and call $\sigma$ \emph{regularly active} if $h_\sigma(z)=h(z)$ for some $z\in\mathcal R$.

\begin{theorem}[Finite active-structure collapse]
\label{thm:active-structure}
Assume the optimal value of \eqref{eq:main-problem} is positive.  Let $\Sigma_{\mathcal R}$ be the set of regularly active structures.  Then at least one global optimizer belongs to
\[
        \bigcup_{\sigma\in\Sigma_{\mathcal R}}\X_\sigma.
\]
Consequently, if each restricted quadratic problem over $\X_\sigma$ can be solved exactly, solving it for all $\sigma\in\Sigma_{\mathcal R}$ and taking the best result solves the original problem.
\end{theorem}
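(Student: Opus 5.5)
The plan is to approximate the self-generated exposing direction $y^\star$ by regular directions, and then combine \cref{thm:direction-stability}, a pigeonhole over the finite index set $\Sigma$, and compactness of each $\X_\sigma$.

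Fix a global optimizer $x^\star$ of \eqref{eq:main-problem} and set $y^\star=B^{\mathsf T}x^\star$. The hypothesis that the optimal value is positive gives $y^\star\ne0$, so by the remark following \cref{thm:self-exposure} the exposing direction is nontrivial. The logic below does not otherwise rely on positivity, but it keeps the setting meaningful. Since $\mathcal R$ is dense, choose a sequence $z_k\in\mathcal R$ with $\varepsilon_k:=\norm{z_k-y^\star}_2\to0$.

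For each $k$, the maximum defining $h(z_k)$ is taken over the finite set $\Sigma$, so it is attained by some $\sigma_k$ with $h_{\sigma_k}(z_k)=h(z_k)$. By definition, each such $\sigma_k$ lies in $\Sigma_{\mathcal R}$. Because $\Sigma$ is finite, some fixed $\sigma$ equals $\sigma_k$ for infinitely many $k$; pass to that subsequence.

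For each such $k$, pick $x_k\in\X_\sigma$ attaining $h_\sigma(z_k)$, which exists by compactness of $\X_\sigma$. Since $h_\sigma(z_k)=h(z_k)=\max_{x\in\X}\inner{z_k}{B^{\mathsf T}x}$, we have $x_k\in\cO_B(z_k)$. Linear maximization over $\Y_B$ and over $P_B=\conv(\Y_B)$ has the same value, so $\widehat y_k:=B^{\mathsf T}x_k\in\Argmax_{y\in P_B}\inner{z_k}{y}$. \Cref{thm:direction-stability} then gives $\norm{B^{\mathsf T}x_k-y^\star}_2\le2\varepsilon_k\to0$.

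By compactness of $\X_\sigma$, a further subsequence has $x_k\to\bar x\in\X_\sigma$. Continuity of $x\mapsto B^{\mathsf T}x$ gives $B^{\mathsf T}\bar x=y^\star$. Hence $q_Q(\bar x)=\norm{y^\star}_2^2$, so $\bar x$ is a global optimizer lying in $\bigcup_{\sigma\in\Sigma_{\mathcal R}}\X_\sigma$.

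For the algorithmic consequence, let $v_\sigma$ be the optimal value of the restricted problem over $\X_\sigma$. Since $\X_\sigma\subseteq\X$, each $v_\sigma$ is at most the global optimum. The structure $\sigma$ found above satisfies $v_\sigma\ge q_Q(\bar x)$, which equals the global optimum. Therefore the best restricted solution over $\Sigma_{\mathcal R}$ attains the optimum.

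The main obstacle is the passage to the limit. A single exact direction $y^\star$ need not lie in $\mathcal R$, so activity at $y^\star$ itself says nothing about regular activity. Conversely, for a regular $z_k$ near $y^\star$, the oracle responses need not have shadow exactly $y^\star$; in sparse PCA they move continuously within a support. Two ingredients resolve this. The quantitative margin in \cref{thm:direction-stability} forces the shadows to converge to $y^\star$, and compactness of the fixed $\X_\sigma$ keeps the limit point inside a regularly active structure. Finiteness of $\Sigma$ is what allows a single $\sigma$ to be fixed along the sequence.
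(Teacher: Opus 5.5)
Your proof is correct and follows the paper's skeleton: regular directions $z_k\to y^\star$, then a pigeonhole over the finite $\Sigma$ to fix one regularly active $\sigma$. Where you differ is in how you pass to the limit.

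The paper passes to the limit in \emph{values}. Continuity of support functions of compact sets gives $h_{\bar\sigma}(y^\star)=h(y^\star)$. Hence a maximizer $\bar x$ of $\inner{y^\star}{B^{\mathsf T}x}$ over $\X_{\bar\sigma}$ lies in $\cO_B(y^\star)$, and the qualitative uniqueness in \cref{thm:self-exposure} makes it optimal.

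You pass to the limit in \emph{points}. \cref{thm:direction-stability} gives $\norm{B^{\mathsf T}x_k-y^\star}_2\le2\varepsilon_k$ for the restricted responses $x_k\in\X_\sigma\cap\cO_B(z_k)$. Compactness of $\X_\sigma$ (equivalently, closedness of $B^{\mathsf T}\X_\sigma$) then yields a point of $\X_\sigma$ with shadow exactly $y^\star$.

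The paper's route is lighter. It needs only uniqueness of exposure and continuity of $h_\sigma$, and no extraction of a convergent sequence of feasible points. Yours invokes the quantitative margin. In exchange, it shows that the approximating responses from the regularly active structure are themselves near-optimal at a linear rate: by \eqref{eq:direction-objective}, they are within $4\norm{y^\star}_2\varepsilon_k$ in objective. That is the natural statement when those responses vary continuously inside a support cell, as in sparse PCA.
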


\begin{proof}
Let $y^\star\ne0$ be an optimal shadow and choose $z_k\in\mathcal R$ with $z_k\to y^\star$.  For each $k$, choose an active structure $\sigma_k$.  Finiteness of $\Sigma$ gives a subsequence on which $\sigma_k=\bar\sigma$.  Support functions of compact sets are continuous, so
$h_{\bar\sigma}(y^\star)=h(y^\star)$.  Choose $\bar x\in\X_{\bar\sigma}$ attaining this value.  Then $\bar x\in\cO_B(y^\star)$; by \cref{thm:self-exposure}, $B^{\mathsf T}\bar x=y^\star$ and $\bar x$ is globally optimal.
\end{proof}

The theorem does not require the point-oracle map to be constant.  Only the identity of a finite active structure must be locally enumerable.

\subsection{Sparse principal component analysis}

Consider the rank-$r$ sparse-PCA problem
\begin{equation}
\label{eq:spca}
       \max_{x\in\R^n}x^{\mathsf T}BB^{\mathsf T}x
       \quad\text{s.t.}\quad
       \norm{x}_2=1,\quad \norm{x}_0\le k.
\end{equation}
For a rank-space direction $z$, write $w=Bz$.  On a fixed support $S$ with $\abs S=k$,
\begin{equation}
\label{eq:spca-linear}
       \max_{\supp(x)\subseteq S,\ \norm{x}_2=1} w^{\mathsf T}x
       =\norm{w_S}_2,
       \qquad
       x_S(z)=\frac{w_S}{\norm{w_S}_2}
\end{equation}
when $w_S\ne0$.  A regular optimal support contains the $k$ largest magnitudes $\abs{w_i}$.  Its identity changes only on
\begin{equation}
\label{eq:spca-events}
       (b_i-b_j)^{\mathsf T}z=0
       \quad\text{or}\quad
       (b_i+b_j)^{\mathsf T}z=0,
       \qquad i<j.
\end{equation}
There are at most $n(n-1)$ distinct central hyperplanes.

\begin{theorem}[Fixed-rank sparse PCA by active supports: generic case]
\label{thm:spca}
Assume $1\le k\le n$, every row $b_i$ is nonzero, and $b_i\ne\pm b_j$ for $i\ne j$.  Let $\mathcal S$ be the supports observed in the full-dimensional cells of arrangement \eqref{eq:spca-events}.  Then
\begin{equation}
\label{eq:spca-enum}
       \operatorname{opt}(\text{\eqref{eq:spca}})
       =\max_{S\in\mathcal S}\lambda_{\max}(B_S^{\mathsf T}B_S).
\end{equation}
Moreover,
\begin{equation}
       \abs{\mathcal S}\le R_{\mathrm c}(n(n-1),r)=O(n^{2r-2})
\end{equation}
for fixed real rank $r$.
\end{theorem}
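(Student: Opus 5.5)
The plan is to derive the identity \eqref{eq:spca-enum} from \cref{thm:active-structure}, and the count from the central-arrangement bound used in \cref{thm:edge-enum}.

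\textbf{Setting up the decomposition.} Write the feasible set of \eqref{eq:spca} as a finite union $\X=\bigcup_{\abs S=k}\X_S$, where $\X_S:=\{x:\supp(x)\subseteq S,\ \norm{x}_2=1\}$. Each $\X_S$ is a compact sphere in the coordinate subspace $\R^S$. Since $\norm{x}_0\le k\le n$, every feasible point lies in some $\X_S$ with $\abs S=k$. By \eqref{eq:spca-linear}, the support functions are $h_S(z)=\norm{(Bz)_S}_2$. Hence $h(z)=\max_S h_S(z)$ is attained exactly at the supports $S$ consisting of $k$ indices with the largest values of $\abs{(Bz)_i}$. Take $\mathcal R$ to be the complement of the union of the hyperplanes \eqref{eq:spca-events}. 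These are genuine hyperplanes: $b_i\ne\pm b_j$ makes each normal $b_i\mp b_j$ nonzero, so $\mathcal R$ is open and dense.

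\textbf{Identifying the regularly active supports.} Fix $z\in\mathcal R$. Then $\abs{b_i^{\mathsf T}z}\ne\abs{b_j^{\mathsf T}z}$ for all $i\ne j$, since $\abs{a}=\abs{c}$ iff $(a-c)(a+c)=0$. So the magnitudes are strictly ordered and the top-$k$ support $S(z)$ is unique. If some $w_i=0$, then at most one index has $w_i=0$. This index is the strict minimum, so for $k\le n$ either it is excluded or $k=n$; in both cases $w_{S(z)}\ne0$ whenever $z\ne0$. Therefore $S(z)$ is the unique active structure at $z$, and $\Sigma_{\mathcal R}$ is exactly the set of supports observed on the cells. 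The ordering of $\abs{w_i}$ can change only when crossing an event hyperplane. Since each cell is connected, $S(z)$ is constant on it, and $\Sigma_{\mathcal R}=\mathcal S$.

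\textbf{Applying the active-structure theorem.} The optimal value is positive because the rows are nonzero: some $e_i$ with $\norm{b_i}>0$ is feasible. \Cref{thm:active-structure} then places a global optimizer in some $\X_S$ with $S\in\mathcal S$. The restricted problem over $\X_S$ is $\max_{\norm{u}_2=1}u^{\mathsf T}B_SB_S^{\mathsf T}u=\lambda_{\max}(B_SB_S^{\mathsf T})=\lambda_{\max}(B_S^{\mathsf T}B_S)$. Taking the maximum over $\mathcal S$ gives the optimum, since every restricted value is feasible and hence at most the optimum. This proves \eqref{eq:spca-enum}.

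\textbf{The cardinality bound.} The arrangement consists of at most $n(n-1)$ distinct central hyperplanes in $\R^r$. Each support in $\mathcal S$ is realized by at least one full-dimensional cell, so $\abs{\mathcal S}$ is at most the number of chambers. The standard central-arrangement region count gives at most $R_{\mathrm c}(N,r)$ chambers with $N\le n(n-1)$, and this is monotone in $N$. This yields $O(n^{2(r-1)})$.

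\textbf{Main obstacle.} The delicate step is the uniqueness and constancy of the top-$k$ support on cells, especially the handling of zero entries of $w$ and the case $k=n$. I would treat it by the observation above that at most one $w_i$ vanishes off the arrangement. For $r=1$ the arrangement may be empty, with a single chamber, which is consistent with the formula.
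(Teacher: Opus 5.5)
Your proposal is correct and follows the paper's proof: you decompose the feasible set into the support spheres $\X_S$, identify the regularly active supports with the top-$k$ supports on the open cells, apply \cref{thm:active-structure}, use $\lambda_{\max}(B_SB_S^{\mathsf T})=\lambda_{\max}(B_S^{\mathsf T}B_S)$, and finish with the central-arrangement count; your checks of density, positivity of the optimum, and constancy on cells fill in details the paper leaves implicit. The remaining issues are cosmetic. The zero-entry discussion is unnecessary, since strictly ordered magnitudes alone make the top-$k$ set the unique maximizer of $\sum_{i\in S}w_i^2$, and the claim $w_{S(z)}\ne0$ can in fact fail in the trivial case $n=1$. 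Also, for $r=1$ and $n\ge2$ the arrangement is the single point $\{0\}$ with two chambers, not empty.
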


\begin{proof}
Use structures $\X_S=\{x:\supp(x)\subseteq S,\ \norm{x}_2=1\}$ for all $k$-element sets $S$.  Under the stated nondegeneracy assumptions, outside \eqref{eq:spca-events} all magnitudes are distinct, and the top-$k$ support is unique and constant within each cell.  Thus \cref{thm:active-structure} shows that one enumerated support contains a global optimizer.  The restricted quadratic optimum is the largest eigenvalue of $B_SB_S^{\mathsf T}$, equal to the largest eigenvalue of $B_S^{\mathsf T}B_S$.  The central-arrangement count gives the bound.
\end{proof}

The point-oracle image is nevertheless generally infinite.  On a cell with support $S$, \eqref{eq:spca-linear} varies continuously with $z$.  If it were constant on a full-dimensional open set, linearity would force the range of $B_S$ to lie on one line; hence $\rank(B_S)\le1$.  Thus $\rank(B_S)\ge2$ generically produces infinitely many point responses within a single support cell.  In rank one, by contrast, the support is fixed away from $z=0$ and there are at most two antipodal responses.  This distinction is the reason for introducing active-structure rather than point-candidate collapse.  The fixed-rank tractability is already known from \citet{asteris2014spca,delpia2023spca}; the role of this derivation is to clarify its oracle geometry.  Permanent ties such as $b_i=\pm b_j$ fall outside the displayed generic theorem and require tie classes or a symbolic-perturbation/limit treatment; Appendix~\ref{secS:degeneracy} records the convention used by the enumeration code.  If $B=0$, every feasible vector is optimal.

\begin{figure}[t]
\centering
\includegraphics[width=0.94\linewidth]{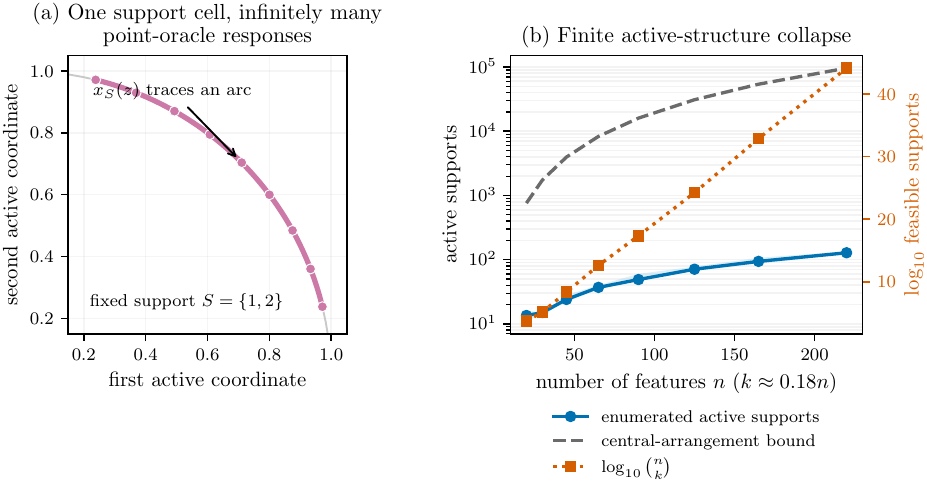}
\caption{Sparse PCA illustrates active-structure collapse.  Left: one support remains fixed while the normalized point-oracle response traces a continuum.  Right: on rank-two Gaussian instances, the number of enumerated active supports remains tiny relative to $\binom{n}{k}$; all 32 independent small-instance comparisons with exhaustive support search were exact.}
\label{fig:spca}
\end{figure}

\FloatBarrier

\section{Approximately low-rank objectives and certification}
\label{sec:approximate}

Exact low rank is rarely literal in data.  Let
\begin{equation}
\label{eq:approx-decomp}
       Q=Q_0+E,
       \qquad Q_0=BB^{\mathsf T}\succeq0,
\end{equation}
where $E=E^{\mathsf T}$ and $\norm{x}_2\le M$ for every $x\in\X$.  Suppose $\varnothing\ne\mathcal C\subseteq\X$ is a finite candidate set containing at least one exact maximizer $x_0$ of $q_{Q_0}$---for example, $\mathcal C$ is obtained from rank-collapse enumeration for $Q_0$---and let
\begin{equation}
\label{eq:approx-choice}
       \widehat x\in\Argmax_{x\in\mathcal C}q_Q(x).
\end{equation}
Define the one-sided spectral width
\begin{equation}
\label{eq:spectral-width}
       \omega(E):=\max\{\lambda_{\max}(E),0\}
                   -\min\{\lambda_{\min}(E),0\}.
\end{equation}

\begin{theorem}[Approximate rank-collapse certificate]
\label{thm:approximate}
Under \eqref{eq:approx-decomp}--\eqref{eq:approx-choice},
\begin{equation}
\label{eq:approx-bound}
       0\le \max_{x\in\X}q_Q(x)-q_Q(\widehat x)
       \le M^2\omega(E)
       \le2M^2\norm{E}_2.
\end{equation}
If $E\succeq0$ or $E\preceq0$, the right-hand side improves to $M^2\norm E_2$.  Writing
\[
       v_0:=\max_{x\in\X}q_{Q_0}(x),
       \qquad
       U:=v_0+M^2\max\{\lambda_{\max}(E),0\},
\]
we also have the computable a posteriori certificate
\begin{equation}
\label{eq:posterior}
       q_Q(\widehat x)\le\operatorname{opt}_Q\le U,
       \qquad
       \operatorname{opt}_Q-q_Q(\widehat x)\le U-q_Q(\widehat x).
\end{equation}
\end{theorem}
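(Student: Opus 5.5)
The plan is a sandwich argument built on the Rayleigh-quotient bounds for $E$.

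Write $\lambda_+:=\max\{\lambda_{\max}(E),0\}$ and $\lambda_-:=\max\{-\lambda_{\min}(E),0\}$, so that $\omega(E)=\lambda_++\lambda_-$. For every $x\in\X$ we have $\norm{x}_2\le M$, and the spectral theorem then gives the two-sided bound
\[
-\lambda_-M^2\le x^{\mathsf T}Ex\le\lambda_+M^2 .
\]
The sign truncations are needed here. Without them, a bound such as $x^{\mathsf T}Ex\ge\lambda_{\min}(E)M^2$ would be wrong when $\lambda_{\min}(E)>0$, because $\norm{x}_2$ may be smaller than $M$. With $\lambda_\pm\ge0$, both inequalities hold for every $\norm{x}_2\le M$, including $x=0$. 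I expect this to be the only delicate step.

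\textbf{Upper bound on the optimum.} Let $x_Q$ be a maximizer of $q_Q$ over $\X$; it exists because $\X$ is compact. Then
\[
\operatorname{opt}_Q=q_{Q_0}(x_Q)+x_Q^{\mathsf T}Ex_Q\le v_0+\lambda_+M^2=U .
\]
\textbf{Lower bound at the returned point.} Since $x_0\in\mathcal C$ and $\widehat x$ maximizes $q_Q$ over $\mathcal C$,
\[
q_Q(\widehat x)\ge q_Q(x_0)=v_0+x_0^{\mathsf T}Ex_0\ge v_0-\lambda_-M^2 .
\]
Here $q_{Q_0}(x_0)=v_0$ because $x_0$ is an exact maximizer of $q_{Q_0}$.

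Subtracting the two displays gives $\operatorname{opt}_Q-q_Q(\widehat x)\le M^2(\lambda_++\lambda_-)=M^2\omega(E)$. The lower bound $0\le\operatorname{opt}_Q-q_Q(\widehat x)$ is immediate since $\widehat x\in\X$.

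\textbf{Norm and semidefinite refinements.} If $\lambda_{\max}(E)>0$ then $\lambda_+\le\norm{E}_2$, and otherwise $\lambda_+=0$; similarly $\lambda_-\le\norm{E}_2$. Hence $\omega(E)\le2\norm{E}_2$. If $E\succeq0$, then $\lambda_-=0$ and $\lambda_+=\lambda_{\max}(E)=\norm{E}_2$. If $E\preceq0$, then $\lambda_+=0$ and $\lambda_-=\norm{E}_2$. In either case $\omega(E)=\norm{E}_2$, which gives the improved constant $M^2\norm{E}_2$.

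\textbf{A posteriori certificate.} The inequality $q_Q(\widehat x)\le\operatorname{opt}_Q$ holds by feasibility, and $\operatorname{opt}_Q\le U$ is the upper bound proved above. Subtracting $q_Q(\widehat x)$ gives the gap bound in \eqref{eq:posterior}. The quantity $U$ is computable because $v_0=\max_{x\in\mathcal C}q_{Q_0}(x)$, using $x_0\in\mathcal C$, and $\lambda_{\max}(E)$ is an eigenvalue computation.
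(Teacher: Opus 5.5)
Your proof is correct and is essentially the paper's argument. The paper bounds $q_Q(x^\star)-q_Q(x_0)$ directly by dropping $q_{Q_0}(x^\star)-q_{Q_0}(x_0)\le0$ and applying the sign-truncated Rayleigh bounds to the two $E$-terms, which is your sandwich $\operatorname{opt}_Q\le U$ and $q_Q(\widehat x)\ge v_0-\lambda_-M^2$ rearranged; your identity $\omega(E)=\lambda_++\lambda_-$ and your remark that $v_0=\max_{x\in\mathcal C}q_{Q_0}(x)$ (so $U$ is computable) are correct points the paper leaves implicit.
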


\begin{proof}
Let $x^\star$ maximize $q_Q$.  Since $\widehat x$ maximizes $q_Q$ over a set containing $x_0$,
\begin{align*}
q_Q(x^\star)-q_Q(\widehat x)
&\le q_Q(x^\star)-q_Q(x_0)\\
&=q_{Q_0}(x^\star)-q_{Q_0}(x_0)
   +(x^\star)^{\mathsf T}Ex^\star-x_0^{\mathsf T}Ex_0\\
&\le M^2\max\{\lambda_{\max}(E),0\}
   -M^2\min\{\lambda_{\min}(E),0\}.
\end{align*}
This is \eqref{eq:approx-bound}; the semidefinite and norm bounds follow immediately.  Finally,
$q_Q(x)\le v_0+M^2\max\{\lambda_{\max}(E),0\}$ for every feasible $x$, proving \eqref{eq:posterior}.
\end{proof}

\begin{corollary}[Rank-adaptive certification for $Q\succeq0$]
\label{cor:rank-adaptive}
Let the eigenvalues of $Q\succeq0$ satisfy $\lambda_1(Q)\ge\cdots\ge\lambda_n(Q)\ge0$.  For $0\le r<n$, let $Q_r$ be the rank-$r$ leading spectral truncation, and let $v_r=\max_{x\in\X}x^{\mathsf T}Q_rx$ be computed exactly by rank collapse.  If $\widehat x_r$ is the best true-$Q$ candidate produced by that enumeration, then
\begin{equation}
\label{eq:rank-adaptive}
       L_r:=\widehat x_r^{\mathsf T}Q\widehat x_r
       \le \operatorname{opt}_Q
       \le U_r:=v_r+M^2\lambda_{r+1}(Q).
\end{equation}
Increasing $r$ until $U_r-L_r$ meets a desired tolerance gives a certified rank-adaptive algorithm.  If $r=n$, set $Q_n=Q$ and $U_n=L_n=v_n=\operatorname{opt}_Q$; exact low rank is recovered earlier whenever $\lambda_{r+1}(Q)=0$.
\end{corollary}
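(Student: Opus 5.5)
The plan is to obtain \cref{cor:rank-adaptive} as a direct specialization of \cref{thm:approximate}, using the spectral decomposition of $Q$. Let $Q=\sum_{i=1}^n\lambda_i(Q)u_iu_i^{\mathsf T}$ with orthonormal eigenvectors $u_i$. Put $B=[\sqrt{\lambda_1}u_1,\dots,\sqrt{\lambda_r}u_r]$, so that $Q_r=BB^{\mathsf T}\succeq0$, and let $E:=Q-Q_r=\sum_{i>r}\lambda_i(Q)u_iu_i^{\mathsf T}$. Then $E$ is symmetric and positive semidefinite, with $\lambda_{\max}(E)=\lambda_{r+1}(Q)$ and $\lambda_{\min}(E)\ge0$. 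This places us in the setting of \eqref{eq:approx-decomp} with $Q_0=Q_r$. It also gives $\omega(E)=\norm{E}_2=\lambda_{r+1}(Q)$, which is the improved semidefinite case of the theorem.

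Next we check the candidate-set hypothesis. Rank-collapse enumeration for $Q_r$ (for instance the projected responses of \cref{cor:fixed-dim}, or the active structures of \cref{thm:active-structure}) produces a finite set $\mathcal C\subseteq\X$ that contains an exact maximizer $x_0$ of $q_{Q_r}$. The best true-$Q$ candidate $\widehat x_r$ then satisfies \eqref{eq:approx-choice}. The lower bound $L_r\le\operatorname{opt}_Q$ holds simply because $\widehat x_r$ is feasible. The upper bound follows from \eqref{eq:posterior}: $U=v_r+M^2\max\{\lambda_{\max}(E),0\}=v_r+M^2\lambda_{r+1}(Q)$. If one wants to avoid citing the posterior certificate, the one-line check is $x^{\mathsf T}Qx=x^{\mathsf T}Q_rx+x^{\mathsf T}Ex\le v_r+\lambda_{r+1}(Q)\norm{x}_2^2\le v_r+M^2\lambda_{r+1}(Q)$ for every $x\in\X$. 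The degenerate cases are handled by convention. For $r=0$, $Q_0=0$ and $v_0=0$, so $\mathcal C$ may be any single feasible point. For $r=n$, $E=0$, so $U_n=v_n=\operatorname{opt}_Q$; since $\widehat x_n$ is an exact maximizer, $L_n=\operatorname{opt}_Q$ as well. Whenever $\lambda_{r+1}(Q)=0$ we have $U_r=v_r$ and $Q=Q_r$, so $L_r=v_r$ and the gap closes.

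Finally, the adaptive loop terminates because the gap $U_r-L_r\ge0$ and it equals $0$ at $r=n$ at the latest, so any tolerance is eventually met. Each stopping point comes with the certified gap $U_r-L_r$. The gap is not necessarily monotone in $r$, but termination only needs the endpoint $r=n$.

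The main obstacle is bookkeeping rather than mathematics. We must make sure the candidate set really contains an exact $Q_r$-maximizer despite ties and duplicate shadows. This is exactly what \cref{thm:self-exposure} and \cref{thm:edge-enum} guarantee, since every response projecting to the optimal vertex is globally optimal for $Q_r$. We must also confirm that $M$ is a uniform bound on $\norm{x}_2$ over $\X$, as assumed in \cref{thm:approximate}.
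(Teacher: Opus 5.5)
Your proposal is correct and takes the same route the paper intends. The corollary is \cref{thm:approximate} specialized to $Q_0=Q_r$ with the positive-semidefinite residual $E=Q-Q_r$, whose largest eigenvalue is $\lambda_{r+1}(Q)$, so the a posteriori certificate \eqref{eq:posterior} reads exactly $L_r\le\operatorname{opt}_Q\le U_r$; your direct one-line upper bound and your handling of $r=n$ and $\lambda_{r+1}(Q)=0$ are also sound.
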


The certificate is useful even when candidate enumeration is expensive at the next rank: it separates a lower bound obtained by evaluating the true objective from a spectral upper bound.  It also makes explicit which constants depend on the feasible radius $M$.

\begin{figure}[H]
\centering
\includegraphics[width=0.94\linewidth]{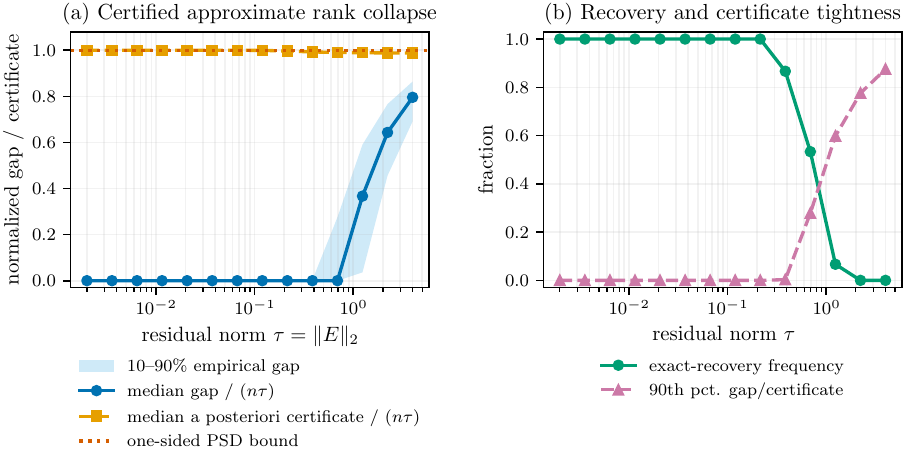}
\caption{Approximately low-rank binary problems with adversarial positive-semidefinite residuals $E=\tau uu^{\mathsf T}$.  Across 420 trials, every observed gap satisfies the one-sided bound $n\tau$ (here $M^2=n$), and the largest gap-to-a-posteriori-certificate ratio is $0.895$.  The experiment deliberately chooses residual directions poorly represented by the rank-two candidate set.}
\label{fig:approximate}
\end{figure}

\section{Curvature boundary and nonclaims}
\label{sec:limits}

The positive-semidefinite orientation in \cref{thm:self-exposure} is essential.

\begin{proposition}[Rank-one negative-semidefinite hardness]
\label{prop:partition}
Given nonzero $a\in\mathbb Z_{>0}^n$, consider
\begin{equation}
\label{eq:partition}
        \max_{x\in\{\pm1\}^n}-(a^{\mathsf T}x)^2.
\end{equation}
The matrix $-aa^{\mathsf T}$ is negative semidefinite of rank one, and the optimum of \eqref{eq:partition} is zero if and only if the associated \textsc{Partition} instance is feasible.  Thus exact low rank alone does not make arbitrary-curvature quadratic maximization polynomial-time unless \textsf{P}=\textsf{NP}.
\end{proposition}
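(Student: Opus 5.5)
The argument is a direct reduction from \textsc{Partition}, which is NP-complete in its standard form. An instance is a list of positive integers $a_1,\dots,a_n$. It is feasible exactly when there is an index set $I\subseteq\{1,\dots,n\}$ with $\sum_{i\in I}a_i=\sum_{i\notin I}a_i$.

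\textbf{Step 1: curvature and rank.} I will first record the structural claim about the matrix. For any $v\in\R^n$,
\[
v^{\mathsf T}(-aa^{\mathsf T})v=-(a^{\mathsf T}v)^2\le0,
\]
so $-aa^{\mathsf T}$ is negative semidefinite. Its range is spanned by $a\ne0$, so its rank is exactly one. The objective in \eqref{eq:partition} is then $x^{\mathsf T}(-aa^{\mathsf T})x$, i.e., a rank-one quadratic with the opposite orientation to $Q=BB^{\mathsf T}$ in \eqref{eq:main-problem}.

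\textbf{Step 2: the equivalence.} I will use the sign-to-subset bijection $x\leftrightarrow I=\{i:x_i=+1\}$. Under it,
\[
a^{\mathsf T}x=\sum_{i\in I}a_i-\sum_{i\notin I}a_i .
\]
The objective $-(a^{\mathsf T}x)^2$ is always at most $0$. It equals $0$ exactly when $a^{\mathsf T}x=0$, which is exactly when $I$ is a balanced partition. Hence the maximum of \eqref{eq:partition} is $0$ if and only if the instance is feasible. Integrality also shows that an infeasible instance has optimum at most $-1$, so exact optimization is a valid decision procedure.

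\textbf{Step 3: complexity conclusion.} The construction of $a$ takes linear time in the input encoding. Therefore a polynomial-time exact algorithm for rank-one negative-semidefinite maximization over $\{\pm1\}^n$ would decide \textsc{Partition} in polynomial time, which forces \textsf{P}=\textsf{NP}. This holds even with the rank fixed at one.

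\textbf{Step 4: the obstacle, and why it is mild.} The only point needing care is reconciling this with \cref{cor:binary}. The contrast is that \cref{thm:self-exposure} relies on $y^\star$ maximizing $\norm{y}_2^2$, which is a convex function of the shadow. Here one instead maximizes $-y^2$ over the shadow set $\{a^{\mathsf T}x\}\subset\R$, which is equivalent to minimizing $\abs{y}$. That optimum is typically an interior point of the segment $\conv(\Y_B)$ rather than a vertex, so there is no exposure and the polynomial candidate bound gives no information. I will state this contrast in one sentence. Everything else is routine bookkeeping.
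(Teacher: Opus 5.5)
Your proposal is correct and follows the paper's proof: $-(a^{\mathsf T}x)^2$ is nonpositive and vanishes exactly when $a^{\mathsf T}x=0$, i.e., when the signs encode an equal-sum split, so NP-completeness of \textsc{Partition} gives the conclusion. Your extra details (the explicit semidefiniteness and rank check, the integrality gap of at least one, and the remark that the minimizing shadow is generally not a vertex of $\conv(\Y_B)$) are correct elaborations of points the paper leaves implicit.
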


\begin{proof}
The objective is always nonpositive and equals zero precisely when $a^{\mathsf T}x=0$, which assigns the integers to two equal-sum parts.  \textsc{Partition} is NP-complete \citep{gareyjohnson1979}.
\end{proof}

This is weak NP-hardness, matching the weak nature of \textsc{Partition}; it is sufficient to rule out a universal fixed-rank theorem across curvature signs.  The paper also does not claim that the generic arrangement bounds are always tight for top-$k$, matroid, or sparse-PCA families.  Their finer $k$-level and matroidal geometry may yield substantially smaller output-sensitive bounds.  The scattering number is intended to expose precisely where such improvements can enter.

\section{Computational evidence}
\label{sec:experiments}

The experiments provide numerical checks and illustrations of the theory rather than evidence of novelty by themselves.  All code, seeds, raw CSV files, vector figures, and environment metadata accompany the manuscript.  Figure text is rendered through \LaTeX; series are distinguished by markers and line styles as well as color.  The full protocol, including event handling and exhaustive-check details, is in Appendix~\ref{secS:experiments}.

\subsection{Sharp scattering and exact rank-two binary optimization}

For ranks $r=2,3,4$, we generated 132 independent Gaussian matrices over the dimension ranges shown in \cref{fig:binary-scattering}.  The vertices of $B^{\mathsf T}[-1,1]^n$ were computed by convex hull, and all 132 numerical counts agree with
$2\sum_{j=0}^{r-1}\binom{n-1}{j}$.  This is a numerical check of the implementation against the classical generic formula, not a substitute for the zonotope theorem.

The rank-two angular sweep was then compared with exhaustive enumeration on independent Gaussian instances for every even $n$ from 10 through 24; the objective values agreed to numerical precision in every comparison.  On an AMD EPYC 9V74 CPU, median sweep time was $0.115$ seconds at $n=750{,}000$.  Exhaustive search took $2.68$ seconds already at $n=24$.  These timings are implementation-specific, but the contrasting polynomial and exponential trends in \cref{fig:binary-runtime} are the substantive result.

\subsection{Collapse across feasible families}

\Cref{fig:cross-family} compares one rank-two instance from five families.  The count is not presented as a universal average; it is a visual census of the objects actually enumerated by the corresponding oracle partition.  In the displayed instances, 48 binary shadows replace $2^{24}$ points, 72 phase patterns replace $4^{18}$ vectors, 19 top-$8$ subsets and 23 sparse-PCA supports replace $\binom{42}{8}$ possibilities, and 20 spanning trees replace the $9^7$ trees of $K_9$.  These reductions range from $2.4\times10^5$ to $9.5\times10^8$.

\begin{figure}[t]
\centering
\includegraphics[width=0.91\linewidth]{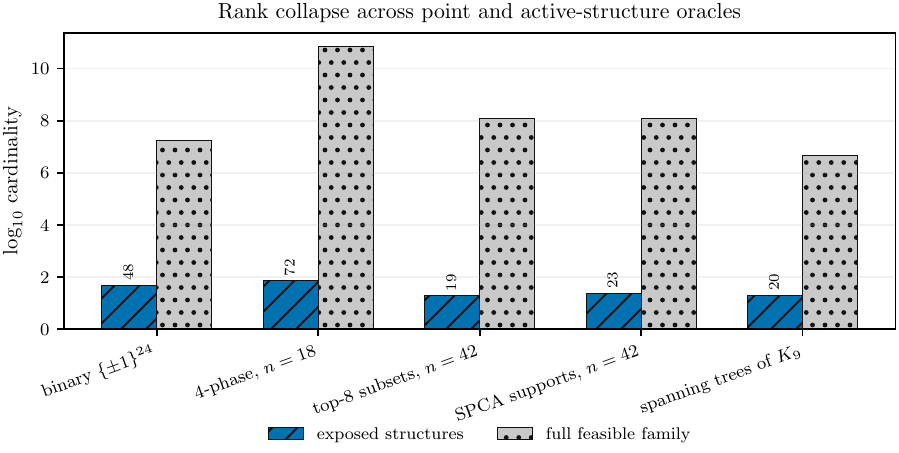}
\caption{A cross-family census of exposed shadows or active structures versus the full feasible family.  Counts are for reproducible rank-two/rank-one instances specified in Appendix~\ref{secS:experiments}; the purpose is to make the combinatorial collapse visible, not to claim these single-instance ratios are distributional estimates.}
\label{fig:cross-family}
\end{figure}

\subsection{Sparse PCA: exactness and finite supports}

For real rank two, active supports were enumerated from the arrangement \eqref{eq:spca-events}.  On 32 independent instances with $n\in\{12,15,18,20\}$, the best enumerated-support eigenvalue matched exhaustive search over all $k$-subsets with zero recorded gap.  In a scaling study with $k\approx0.18n$, the median number of active supports grew from $13.5$ at $n=20$ to 128 at $n=220$, while $\binom{220}{40}$ is astronomically larger.  Median runtime at $n=220$ was $0.373$ seconds.  The left panel of \cref{fig:spca} separately demonstrates the theoretically important fact that finite support collapse does not imply a finite point-oracle image.

\subsection{Approximate rank and directional stability}

The approximate-rank study uses $n=18$ binary instances, a random rank-two core, and a deliberately adversarial residual $E=\tau uu^{\mathsf T}$ whose direction has low correlation with the core candidate set.  There are 30 independent cores and 14 logarithmically spaced residual norms, for 420 trials.  Every gap obeys the $n\tau$ semidefinite-residual theorem bound; the maximum normalized gap is $0.884$, and the maximum ratio of true gap to the computable a posteriori certificate is $0.895$.  Thus the certificate can be close to its one-sided bound under targeted residuals rather than only under benign noise.

Finally, \cref{fig:stability} stress-tests \cref{thm:direction-stability} on a finite rank-two binary shadow set.  Across 10,025 random and adversarial perturbations, the maximum observed ratio $\norm{\widehat y-y^\star}/(2\varepsilon)$ is $0.154$.  The guaranteed exact-recovery threshold is $\varepsilon/\delta<1/2$, while the nearest boundary of the actual normal cone occurs at $2.708$ for this instance, illustrating that the theorem is universal and conservative rather than instance-tight.

\begin{figure}[t]
\centering
\includegraphics[width=0.94\linewidth]{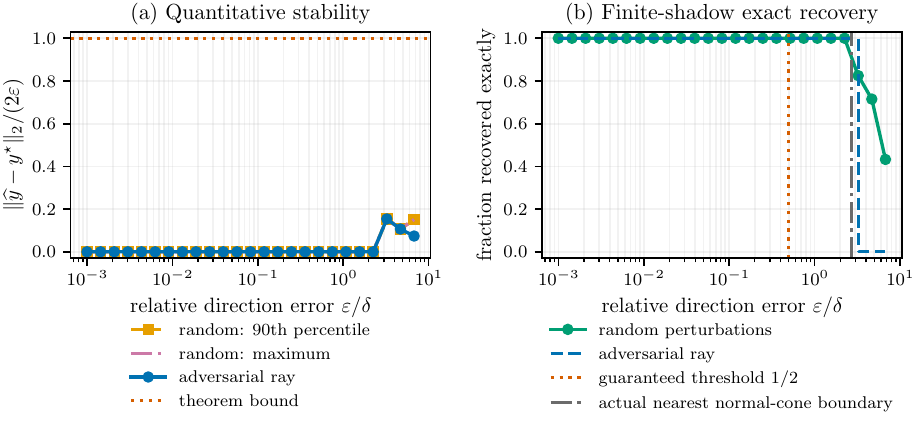}
\caption{Directional stability.  Left: all measured errors satisfy \cref{thm:direction-stability}.  Right: the universal finite-shadow threshold $\varepsilon<\delta/2$ guarantees exact recovery; the particular instance remains stable much farther, until the nearest normal-cone boundary.}
\label{fig:stability}
\end{figure}

\begin{table}[t]
\centering
\caption{Headline reproducibility checks.  Complete records and per-instance values are supplied as CSV files.}
\label{tab:experiments}
\small
\begin{tabularx}{\linewidth}{@{}l X r@{}}
\toprule
Experiment & Check & Result \\
\midrule
Projected cubes & numerical vertex counts match generic formula & 132 / 132 \\
Rank-two binary & sweep objective equals exhaustive objective & all tested $n\le24$ \\
Sparse PCA & active-support optimum equals all-support optimum & 32 / 32 \\
Approximate rank & gap satisfies $M^2\norm E_2$ for $E\succeq0$ & 420 / 420 \\
Directional stability & $\norm{\widehat y-y^\star}\le2\varepsilon$ & 10,025 / 10,025 \\
\bottomrule
\end{tabularx}
\end{table}

\section{Discussion}
\label{sec:discussion}

The rank-collapse principle is deliberately modest in its universal claim and broad in its consequences.  A positive-semidefinite quadratic optimum generates a rank-space normal that exposes its own shadow uniquely.  That statement survives noninjective projections and oracle ties.  What happens next is determined by feasible-set geometry:
\begin{itemize}
\item projected polytopes lead to finite shadow scattering and normal-fan enumeration;
\item unions of tractable pieces lead to active-structure collapse even when point responses are infinite;
\item spectral residuals lead to certified approximate collapse;
\item changing curvature can destroy tractability at rank one.
\end{itemize}

The framework suggests several concrete research directions.  The generic arrangement bound should be replaced by sharper output-sensitive bounds for $k$-levels, graphic matroids, and structured phase alphabets.  Rank-adaptive certification can be combined with lazy projected-fan generation so that rank is increased only when the current spectral certificate requires it.  For sparse PCA and related union-of-subspace problems, the active-structure theorem invites algorithms that enumerate only supports encountered by a normal-fan walk rather than all arrangement chambers.  Finally, the margin \eqref{eq:margin-identity} may support finite-precision and stochastic-oracle analyses beyond the deterministic perturbation bound proved here.

The central lesson is therefore not that ``low rank makes quadratic optimization easy.''  It is sharper:
\begin{quote}
\emph{Low positive-semidefinite rank collapses nonlinear global optimality to low-dimensional self-exposure; the projected or active geometry of the feasible set determines whether that exposure becomes an efficient exact algorithm.}
\end{quote}

\section*{Data and code disclosure}
A general-purpose large language model was used as an assistive tool for drafting and editing, code generation and debugging, and proof exploration.  The accompanying reproducibility package includes the complete Python experiment suite, raw CSV output, environment metadata, and vector PDF figures.  Running \texttt{python code/experiments.py --all} from the package root regenerates the numerical material.  No external data are required.

\clearpage
\appendix
\numberwithin{equation}{section}
\numberwithin{figure}{section}
\numberwithin{table}{section}
\section{Normal-cone interpretation and finite-shadow margins}
\label{secS:normal}

Let $P\subseteq\R^r$ be compact and convex, and let $y^\star$ maximize $\norm y_2^2$ over $P$.  Its normal cone is
\[
N_P(y^\star):=\{z\in\R^r:\inner{z}{y-y^\star}\le0\ \text{for all }y\in P\}.
\]
The self-exposure identity from the main paper implies $y^\star\in N_P(y^\star)$ and, more strongly,
\begin{equation}
\label{eqS:strong-normal}
       \inner{y^\star}{y^\star-y}\ge\frac12\norm{y^\star-y}_2^2
       \qquad (y\in P).
\end{equation}
For a general smooth convex body, $N_P(y^\star)$ can be a ray; unique exposure does not imply a full-dimensional normal cone.  For a polytope, however, unique exposure makes $y^\star$ a vertex and its relative normal cone is full-dimensional in $L=\lin(P-P)$.

\subsection{Distance to a finite normal-fan boundary}

Suppose $\Y\subset\R^r$ is finite and $y^\star$ is its optimal shadow.  For a competitor $y\ne y^\star$, set $d_y=y^\star-y$.  The affine hyperplane in direction space where $y$ ties $y^\star$ is
\[
       \{z:\inner{z}{d_y}=0\}.
\]
Starting at $z=y^\star$, its Euclidean distance is
\begin{equation}
\label{eqS:critical}
       \varepsilon_y^{\mathrm{crit}}
       =\frac{\inner{y^\star}{d_y}}{\norm{d_y}_2}
       \ge\frac12\norm{d_y}_2,
\end{equation}
where the inequality follows from \eqref{eqS:strong-normal}.  Consequently, if
$\delta=\min_{y\ne y^\star}\norm{y-y^\star}_2$, the distance from $y^\star$ to the nearest pairwise tie hyperplane is at least $\delta/2$.  This is the geometric content of the exact-recovery part of the directional-stability theorem.

The bound can be conservative.  It uses only Euclidean separation, whereas the actual boundary depends on the angle between $y^\star$ and each competitor difference.  The direction-stability experiment in the paper reports both $\delta/2$ and the exact minimum of \eqref{eqS:critical}.

\subsection{Duplicate shadows and oracle selections}

Let $\cO_B(z)=\Argmax_{x\in\X}\inner{z}{B^{\mathsf T}x}$.  If two feasible points have the same shadow, no direction can distinguish them.  The main theorem therefore makes two separate assertions:
\begin{enumerate}
\item the optimal \emph{shadow} is a unique linear maximizer;
\item every feasible representative returned at that shadow is quadratically optimal.
\end{enumerate}
It does not assert that a fixed single-valued selection from $\cO_B$ returns every optimal representative.  This distinction removes the tie defect that arises from defining a candidate set through arbitrary deterministic selections.

The zero direction is excluded from meaningful candidate enumeration because $\cO_B(0)=\X$.  If the quadratic optimum is positive, the self-exposing direction $y^\star$ is nonzero.  If it is zero, every shadow vanishes and every feasible point is optimal.

\section{Complete proof of projected edge-direction enumeration}
\label{secS:edge}

Let $P\subseteq\R^n$ be a nonempty polytope, $B\in\R^{n\times r}$, and $P_B=B^{\mathsf T}P$.  Let $L=\lin(P_B-P_B)$ and $d=\dim L$.  A finite set $\cD$ covers the edge directions of $P$ if, for every edge $[u,v]$ of $P$, $v-u=\alpha d_0$ for some $d_0\in\cD$ and nonzero scalar $\alpha$.

For every $d_0\in\cD$, define $g(d_0)=\proj_L(B^{\mathsf T}d_0)$.  Discard $g(d_0)=0$, identify nonzero scalar multiples, and define the proper central hyperplane $H_g=\{z\in L:\inner z g=0\}$.  Denote the resulting arrangement by $\cH$.  The projection is needed because an extraneous covering direction can have a nonzero ambient image but induce the zero functional on $L$.

\begin{lemma}[A chamber maps a maximizing face to one point]
\label{lemS:chamber-singleton}
If $z\in L\setminus\bigcup_{H\in\cH}H$ and
$F=\Argmax_{x\in P}\inner z{B^{\mathsf T}x}$, then $B^{\mathsf T}F$ is a singleton.
\end{lemma}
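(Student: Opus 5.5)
We argue by contradiction, in the same way as the main-text proof of \cref{thm:edge-enum}, but keeping the projection onto $L$ explicit. Since $P$ is a polytope and $x\mapsto\inner z{B^{\mathsf T}x}=\inner{Bz}{x}$ is linear, the set $F$ is a nonempty face of $P$. It is therefore itself a polytope, and its edges are exactly the edges of $P$ contained in $F$. Suppose $B^{\mathsf T}F$ contains two distinct points, say $B^{\mathsf T}p\ne B^{\mathsf T}q$ with $p,q\in F$.

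First reduce to vertices. We may take $p,q$ to be vertices of $F$: if $B^{\mathsf T}$ were constant on the vertex set of $F$, it would be constant on $F=\conv(\Vtx F)$. Next use the fact that the edge graph of a polytope is connected. Choose a path of edges of $F$ from $p$ to $q$. Along this path, $B^{\mathsf T}$ must change value somewhere, so some edge $[u,v]$ of $F$, and hence of $P$, satisfies $B^{\mathsf T}(v-u)\ne0$.

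Both $u$ and $v$ lie in $F$, so $\inner z{B^{\mathsf T}u}=\inner z{B^{\mathsf T}v}$, that is, $\inner z{B^{\mathsf T}(v-u)}=0$. By the covering hypothesis, $v-u=\alpha d_0$ for some $d_0\in\cD$ and some $\alpha\ne0$. We now pass to $L$. Since $u,v\in P$, the vector $B^{\mathsf T}(v-u)$ lies in $P_B-P_B\subseteq L$, so $B^{\mathsf T}(v-u)=\proj_L B^{\mathsf T}(v-u)=\alpha\,g(d_0)$. This vector is nonzero, so $g(d_0)\ne0$ and $H_{g(d_0)}\in\cH$. Because $z\in L$, the relation $\inner z{B^{\mathsf T}(v-u)}=0$ gives $\inner z{g(d_0)}=0$, so $z\in H_{g(d_0)}$. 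This contradicts the choice of $z$ off every hyperplane of $\cH$.

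The only delicate step is this projection bookkeeping. We must check that the relevant edge direction induces a \emph{nonzero} functional on $L$, so that its hyperplane was not discarded, and that the orthogonality condition for $z$ transfers from $B^{\mathsf T}d_0$ to $g(d_0)$. Both follow from $B^{\mathsf T}(v-u)\in L$ together with $z\in L$. The degenerate case $d=0$ needs no argument: then $P_B$ is a single point, so $B^{\mathsf T}F$ is automatically a singleton.
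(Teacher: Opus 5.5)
Your proof is correct and follows the paper's argument essentially step for step. In both, you take two vertices of the maximizing face with distinct images, use connectivity of the face's edge graph to find an edge $[u,v]$ with $B^{\mathsf T}(v-u)\ne0$, and note that $B^{\mathsf T}(v-u)\in L$ is a nonzero multiple $\alpha\,g(d_0)$ of a retained normal annihilated by $z$. Your added bookkeeping (the reduction via $F=\conv(\Vtx F)$ and the linearity of $\proj_L$) just makes explicit what the paper states in one line.
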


\begin{proof}
Assume $B^{\mathsf T}F$ is not a singleton.  Then two vertices of $F$ have different images.  The graph of every polytope face is connected, so a path of edges in $F$ connects them.  At least one edge $[u,v]$ on that path satisfies $B^{\mathsf T}(v-u)\ne0$.  Because the entire edge lies in the maximizing face,
\[
       \inner z{B^{\mathsf T}(v-u)}=0.
\]
Its direction is covered by $\cD$.  Since $B^{\mathsf T}(v-u)\in L$, it is a nonzero scalar multiple of the retained projected normal $g(d_0)$.  Hence $z\in H_g$, a contradiction.
\end{proof}

\begin{lemma}[Every projected vertex has a chamber direction]
\label{lemS:vertex-chamber}
For every $y\in\Vtx(P_B)$ there is a full-dimensional chamber $C$ of $\cH$ such that every linear-oracle response at every $z\in C$ projects to $y$.
\end{lemma}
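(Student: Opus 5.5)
The plan is to take the relative normal cone of $y$ inside $L$, show that it contains an open subset of $L$, and then locate a chamber inside that open set. Define
\[
K_y:=\{z\in L:\inner{z}{y'-y}\le0\ \text{for all }y'\in P_B\}.
\]
Since $P_B$ is a polytope spanning the affine hull $y+L$, it has finitely many vertices $y_1,\dots,y_m$ besides $y$. Because $y$ is a vertex, there is some $z_0\in L$ with $\inner{z_0}{y_i-y}<0$ for all $i$. Such a $z_0$ is obtained by exposing $y$ with a linear functional and then projecting that functional onto $L$; this does not change its values on differences in $L$. The strict inequalities are finitely many and continuous in $z$, so they persist on an open neighbourhood $U\subseteq L$ of $z_0$. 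Hence every $z\in U$ exposes $y$ uniquely over $P_B$, so every linear-oracle response at $z$ projects to $y$. Here I use that a maximizer over $\X$ of $\inner{z}{B^{\mathsf T}x}$ maximizes the functional over $P$ as well, since the linear optimum over $\X$ and over $P=\conv(\X)$ coincide.

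Next I pick the chamber. The arrangement $\cH$ consists of finitely many proper hyperplanes of $L$, each of which is closed and nowhere dense in $L$. A finite union of them therefore cannot contain the nonempty open set $U$. So there is a point $z_1\in U\setminus\bigcup_{H\in\cH}H$, and $z_1$ lies in a unique full-dimensional chamber $C$, which is a connected open component of the complement. The degenerate case $d=0$ is handled separately: then $P_B$ is a single point, the only chamber is $L=\{0\}$, and every response projects to $y$ trivially.

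Finally I show that the projected response is the same at every point of $C$. By \cref{lemS:chamber-singleton}, at every $z\in C$ the maximizing face $F_z$ has a singleton image $B^{\mathsf T}F_z=\{\phi(z)\}$, and $\phi(z)$ is necessarily a vertex of $P_B$. The map $\phi$ is locally constant on $C$: if $\phi(z)=v$ is the unique maximizer of $\inner{z}{\cdot}$ among the finitely many vertices of $P_B$, the strict inequalities separating $v$ from the others survive under small perturbations of $z$. A locally constant map on a connected set is constant, and $\phi(z_1)=y$, so $\phi\equiv y$ on $C$. This gives the lemma.

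The step I expect to need the most care is the first one: producing an exposing direction that lies inside $L$ rather than in $\R^r$, and making sure the projection argument is clean. The key fact is that every $y'-y$ lies in $L$, so $\inner{z}{y'-y}=\inner{\proj_L z}{y'-y}$ for all $y'\in P_B$.
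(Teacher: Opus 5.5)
Your proof is correct and follows the paper's argument. The paper also picks a point of $\operatorname{int}_L N_{P_B}(y)$ that avoids the finitely many proper hyperplanes of $\cH$, and then extends the conclusion to the whole chamber via its stated alternative: \cref{lemS:chamber-singleton}, local constancy of the unique projected maximizer, and connectedness of the chamber. You also justify the nonempty relative interior by projecting an exposing functional onto $L$, and you treat $d=0$ explicitly; the paper leaves both details implicit.
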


\begin{proof}
The relative normal cone
\[
       N_{P_B}(y):=\{z\in L:\inner z{w-y}\le0\text{ for all }w\in P_B\}
\]
has nonempty interior in $L$ because $y$ is a vertex.  A finite union of proper hyperplanes cannot contain that interior.  Choose
$z\in\operatorname{int}_L N_{P_B}(y)\setminus\bigcup\cH$.  The direction lies in a chamber and uniquely maximizes $\inner z w$ over $P_B$ at $y$.  Any maximizing feasible point in $\X$ therefore projects to $y$.  The same remains true throughout the connected chamber because no projected edge can become tied without crossing an arrangement hyperplane; alternatively, apply \cref{lemS:chamber-singleton} and continuity of the support function.
\end{proof}

\begin{theorem}[Projected edge-direction theorem]
One linear-oracle call over the original feasible set $\X$ from each full-dimensional chamber of $\cH$ produces every vertex of $P_B$, possibly with duplicates.
\end{theorem}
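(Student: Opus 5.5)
The plan is to assemble the theorem directly from \cref{lemS:chamber-singleton} and \cref{lemS:vertex-chamber}, after two small bridging remarks. First, I would check that a linear-oracle call over $\X$ is equivalent to one over $P=\conv(\X)$. For any $z$, the functional $x\mapsto\inner{z}{B^{\mathsf T}x}$ is linear, so its maxima over $\X$ and over $P$ coincide. Moreover, every maximizer in $\X$ lies in the maximizing face $F=\Argmax_{x\in P}\inner{z}{B^{\mathsf T}x}$. Hence the shadow of any oracle response over $\X$ belongs to $B^{\mathsf T}F$.

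Second, I would fix an arbitrary vertex $y\in\Vtx(P_B)$ and invoke \cref{lemS:vertex-chamber}. This gives a full-dimensional chamber $C$ in which every response projects to $y$. I would then show this holds for whichever representative $z_C$ the algorithm actually picks from $C$, not only for the particular $z$ built in the lemma's proof. For this I would use \cref{lemS:chamber-singleton}: for every $z\in C$, the set $B^{\mathsf T}F(z)$ is a single point, necessarily a vertex of $P_B$, namely the unique maximizer of $\inner{z}{\cdot}$ over $P_B$. The map $z\mapsto$ (this vertex) is locally constant on $C$. Indeed, $P_B$ has finitely many vertices, and strict inequalities between the value at the unique maximizing vertex and the values at the other vertices persist under small perturbations of $z$. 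Since $C$ is an open convex set, hence connected, the map is constant on $C$. It therefore equals $y$ at $z_C$, so $B^{\mathsf T}x_C=y$.

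Ranging over all vertices then shows that the collection $\{B^{\mathsf T}x_C\}_C$ contains every vertex of $P_B$. Duplicates are possible because distinct chambers may map to the same vertex, since the arrangement may be finer than the normal fan. I would also treat the degenerate case $d=0$ separately. There $L=\{0\}$, the whole of $L$ is the single chamber, and $P_B$ is a single point, so any oracle response suffices. The same convention covers $N=0$, where the single chamber is all of $L$ and \cref{lemS:chamber-singleton} forces $P_B$ to be a point.

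The main obstacle I expect is the constancy step: making precise that the ``vertex of $P_B$'' selected at $z$ is well defined for every $z$ in $C$. This relies on the singleton lemma, which is stated for $z\in L$. So I would be careful that chamber representatives are taken in $L$, not in $\R^r$. Components of $z$ orthogonal to $L$ only add a constant to $\inner{z}{y}$ on $P_B$, so restricting to $L$ loses nothing.
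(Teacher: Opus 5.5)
Your proposal is correct and follows the paper's proof, which also obtains the theorem by combining \cref{lemS:chamber-singleton} (each chamber response projects to a single vertex) with \cref{lemS:vertex-chamber} (every projected vertex is reached from some chamber). Your bridging steps are the same arguments the paper places inside the lemma proofs and in the main-text proof of \cref{thm:edge-enum}: that oracle maximizers over $\X$ lie in the maximizing face of $P$, and that the projected response is locally constant and hence constant on the connected chamber.
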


\begin{proof}
By \cref{lemS:chamber-singleton}, each chamber response projects to a single exposed point, hence a projected vertex.  By \cref{lemS:vertex-chamber}, every projected vertex occurs.
\end{proof}

\subsection{Central-arrangement count}

Let $R_{\mathrm c}(N,d)$ be the maximum number of full-dimensional regions cut out in $\R^d$ by $N$ central hyperplanes.  For $N,d\ge1$,
\begin{equation}
\label{eqS:central-count}
       R_{\mathrm c}(N,d)=2\sum_{j=0}^{d-1}\binom{N-1}{j}
\end{equation}
under general position, and this expression is an upper bound without general position \citep{zaslavsky1975arrangements,matousek2002discrete}.  One way to see the recurrence is
\[
       R_{\mathrm c}(N,d)
       \le R_{\mathrm c}(N-1,d)+R_{\mathrm c}(N-1,d-1),
\]
with $R_{\mathrm c}(1,d)=2$ and $R_{\mathrm c}(N,1)=2$.  Adding the $N$th hyperplane splits only regions intersected by it, and the induced arrangement on that hyperplane has dimension at most $d-1$.  Solving the recurrence gives the displayed upper bound; equality holds in general position.  If $N=0$ or $d=0$, there is one region.

For fixed $d$, arrangement construction and cell traversal can be performed in time polynomial in $N$ and the output size \citep{edelsbrunner1987algorithms}.  The manuscript states oracle-call complexity separately from bit complexity because the latter depends on how rational chamber representatives and the underlying linear oracle are implemented.

\section{Complex realification and finite-phase alphabets}
\label{secS:complex}

Let $Q=BB^*$ be Hermitian positive semidefinite, $B\in\C^{n\times r}$, and $x\in\C^n$.  Equip $\C^r$ with the real inner product
\[
       \inner{z}{y}_{\R}:=\operatorname{Re}(z^*y).
\]
Define $\rho:\C^r\to\R^{2r}$ by
$\rho(y)=(\operatorname{Re}y,\operatorname{Im}y)$.  Then
\[
       \inner{z}{y}_{\R}=\rho(z)^{\mathsf T}\rho(y),
       \qquad
       \norm y_2=\norm{\rho(y)}_2.
\]
All self-exposure and stability proofs therefore apply verbatim in real dimension at most $2r$ after replacing transposes by conjugate transposes and ordinary inner products by their real parts.  In particular,
\[
\operatorname{Re}\{(y^\star)^*(y^\star-y)\}
=\frac12\bigl(\norm{y^\star}_2^2-\norm y_2^2+\norm{y^\star-y}_2^2\bigr).
\]

For the alphabet $\Omega_m=\{e^{2\pi\mathrm iq/m}:q=0,\ldots,m-1\}$, the convex hull of one coordinate is a regular $m$-gon.  Its edge directions are the $m$ adjacent phase differences
\[
       e^{2\pi\mathrm i(q+1)/m}-e^{2\pi\mathrm iq/m},
       \qquad q=0,\ldots,m-1.
\]
The product polytope $\conv(\Omega_m^n)$ has edges that vary in one coordinate only, so at most $mn$ coordinate-edge directions suffice.  After projection and realification, each nonzero direction induces a central real hyperplane.  Therefore the number of projected vertices is bounded by
\[
       R_{\mathrm c}(mn,d),\qquad d\le2r.
\]
In complex rank one, directions are points on a circle.  Each coordinate phase quantizer changes at at most $m$ boundary angles, so sorting at most $mn$ events gives an exact angular sweep.  Coincident boundaries are grouped.

\section{Rank-two sweeps, ties, and boundary events}
\label{secS:degeneracy}

\subsection{Binary sweep}

Write row $i$ of $B\in\R^{n\times2}$ as $b_i^{\mathsf T}$ and parameterize a unit direction by $z(\theta)=(\cos\theta,\sin\theta)$.  The binary oracle is
\[
       x_i(\theta)=\operatorname{sign}(b_i^{\mathsf T}z(\theta))
\]
away from event angles.  Because the sign pattern at $\theta+\pi$ is the negative of the sign pattern at $\theta$, one half-circle contains one representative from every antipodal pair.

For generic rows, each event flips one coordinate.  Initialize at a midpoint between consecutive sorted events and form
$y=B^{\mathsf T}x$.  When coordinate $i$ flips,
\[
       y\leftarrow y-2x_i b_i,
       \qquad x_i\leftarrow-x_i.
\]
The objective $\norm y_2^2$ is evaluated after each update.  This is the algorithm used for the runtime experiment.

If several rows have the same event angle, all corresponding signs change when the sweep crosses that angle.  Updating them as a group reaches the adjacent full-dimensional chamber directly.  Intermediate one-at-a-time states would be boundary tie selections and are neither required nor, in general, genuine chamber responses.  Zero rows can be discarded because they do not affect the shadow or objective.  Equivalent correctness can be obtained by an infinitesimal symbolic perturbation followed by deduplication.

\subsection{Cardinality and sparse-PCA support events}

For ordinary top-$k$ selection, score order changes when
$(b_i-b_j)^{\mathsf T}z=0$.  For sparse PCA, the generic theorem in the main paper assumes nonzero rows with $b_i\ne\pm b_j$ for $i\ne j$, so no magnitude comparison is tied identically in every direction.  Under that assumption, magnitude order changes when
\[
       \abs{b_i^{\mathsf T}z}=\abs{b_j^{\mathsf T}z},
\]
which is the union of $(b_i-b_j)^{\mathsf T}z=0$ and $(b_i+b_j)^{\mathsf T}z=0$.  A regular cell then has strict order and hence one top-$k$ support.  At an ordinary boundary, several supports can tie.  Nongeneric permanent ties can instead be represented by tie classes; a symbolic perturbation may be used to select adjacent generic cells, after which all limiting supports must be evaluated rather than an arbitrary single tie-breaking support.  The active-structure theorem shows that it is enough to enumerate supports from the dense set of regular cells: if the self-exposing direction lies on a boundary, a subsequence of adjacent regular cells contains a support whose closure remains active at that direction and contains a global quadratic optimizer.

The implementation used in the rank-two experiments computes all unique event lines in $[0,\pi)$, evaluates the top-$k$ support at every cell midpoint, and deduplicates supports.  It never uses the zero direction.  For each sparse-PCA support $S$, it evaluates $\lambda_{\max}(B_S^{\mathsf T}B_S)$, an $r\times r$ eigenproblem.

\subsection{Boundary cases}

The following conventions make the theorem statements exhaustive.
\begin{itemize}
\item $\X$ is required to be nonempty.  Compactness gives attainment of all continuous objectives and linear support functions.
\item Rank zero is the zero-optimum case: every shadow is zero and every point is optimal.
\item Effective dimension is $d=\dim\lin(P_B-P_B)$, not necessarily the column rank of $B$.
\item Duplicate rows, zero projected edge directions, and coincident hyperplanes reduce the arrangement count; they do not increase it.
\item For top-$k$, $k=0$ and $k=n$ give a singleton feasible set.  Sparse PCA assumes $1\le k\le n$ and unit norm.
\item A projected vertex can have many feasible preimages.  Any preimage returned by a linear oracle has the same quadratic value.
\end{itemize}

\section{Oracle ascent in shadow space}
\label{secS:ascent}

The main paper does not use an iterative method for its exact algorithms, but the self-exposure map gives a useful monotonicity result.  Let $x_t\in\X$, $y_t=B^{\mathsf T}x_t$, and choose
\begin{equation}
\label{eqS:ascent}
       x_{t+1}\in\Argmax_{x\in\X}\inner{y_t}{B^{\mathsf T}x},
       \qquad y_{t+1}=B^{\mathsf T}x_{t+1}.
\end{equation}

\begin{proposition}[Monotone shadow ascent]
For every sequence satisfying \eqref{eqS:ascent},
\[
       \norm{y_{t+1}}_2\ge\norm{y_t}_2.
\]
If $y_{t+1}\ne y_t$, the inequality is strict.  If the shadow set is finite, the shadow sequence terminates after finitely many strict changes at a self-exposed shadow $\bar y$ satisfying
\[
       \bar y\in\Argmax_{y\in\Y_B}\inner{\bar y}{y}.
\]
Every globally optimal shadow is such a fixed shadow, but a fixed shadow need not be globally optimal.
\end{proposition}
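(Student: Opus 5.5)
The plan is to read everything off the polarization identity already used in \cref{thm:self-exposure}, applied in the one-step form $\norm{y_{t+1}}_2^2-\norm{y_t}_2^2 = 2\inner{y_t}{y_{t+1}-y_t}+\norm{y_{t+1}-y_t}_2^2$. Because $x_{t+1}$ maximizes $\inner{y_t}{B^{\mathsf T}x}$ over $\X$ and $x_t\in\X$ is a competitor, we have $\inner{y_t}{y_{t+1}}\ge\inner{y_t}{y_t}$. Hence both terms on the right are nonnegative, which gives monotonicity immediately. If $y_{t+1}\ne y_t$, the second term is strictly positive, which gives strictness. No assumption on $y_t$ being nonzero is needed: when $y_t=0$ the oracle is all of $\X$, and the identity still applies.

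For termination with a finite shadow set $\Y_B$, note that the norms $\norm{y_t}_2$ strictly increase at every change of shadow. So no shadow can reappear after the sequence has left it: a reappearance would force a strict increase followed by a return to the same norm. Hence the number of strict changes is at most $\abs{\Y_B}-1$, and from some index on $y_{t+1}=y_t=:\bar y$. A small point is that the selection is set-valued, so "terminates" should be read as: once a step produces no change, we stop. Alternatively, we can observe that any later strict change is still bounded by the same finiteness count.

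At a stationary step, $y_{t+1}=\bar y$ with $x_{t+1}\in\Argmax_{x}\inner{\bar y}{B^{\mathsf T}x}$, so $\inner{\bar y}{\bar y}\ge\inner{\bar y}{y}$ for all $y\in\Y_B$. This is exactly the fixed-shadow condition $\bar y\in\Argmax_{y\in\Y_B}\inner{\bar y}{y}$.

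That every global optimum is such a fixed shadow is \cref{thm:self-exposure}, via \eqref{eq:oracle-self}: every oracle response at $y^\star$ has shadow $y^\star$, so $y^\star$ is in fact uniquely fixed.

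The only part requiring a construction is that a fixed shadow need not be optimal, and I expect this to be the main (mild) obstacle. The plan is a small explicit example. Take a finite $\Y_B\subset\R^2$ containing a point $\bar y=(1,0)$ and a point $(0,2)$, with nothing else, realized for instance by $\X=\{e_1,e_2\}\subset\R^2$ and $B^{\mathsf T}=\operatorname{diag}(1,2)$. Then $\inner{\bar y}{(0,2)}=0<1=\inner{\bar y}{\bar y}$, so $\bar y$ is self-exposed, yet $\norm{(0,2)}_2=2>1$. Starting the ascent at $x_0=e_1$ therefore stalls at a nonoptimal fixed shadow.
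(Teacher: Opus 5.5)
Your proof is correct. Its skeleton matches the paper's: the oracle inequality with $x_t$ as competitor, finiteness of $\Y_B$ for termination, the fixed-shadow condition read off a stationary step, and \cref{thm:self-exposure} for global optima.

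The difference is in the monotonicity step. The paper combines $\inner{y_t}{y_{t+1}}\ge\norm{y_t}_2^2$ with Cauchy--Schwarz. That route needs a case split on $y_t=0$, and for strictness an equality-case analysis (positive proportionality plus equal norms). You instead reuse the polarization identity behind \eqref{eq:margin-identity}, now at a non-optimal point. This gives the quantitative increment $\norm{y_{t+1}}_2^2-\norm{y_t}_2^2\ge\norm{y_{t+1}-y_t}_2^2$, which yields monotonicity and strictness at once with no case split. It also bounds the number of strict changes by $\max_{y\in\Y_B}\norm{y}_2^2/\delta^2$, where $\delta$ is the minimum distance between distinct shadows; the paper's route is equally short but only qualitative.

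Your no-revisit argument bounds the number of strict changes by $\abs{\Y_B}-1$, which is slightly sharper than the paper's bare finiteness remark. It also handles the set-valued subtlety that a stationary step can later be followed by a tie-induced move to a larger-norm shadow. Finally, your example with $\X=\{e_1,e_2\}$ and $B^{\mathsf T}=\operatorname{diag}(1,2)$ exhibits a non-optimal fixed shadow. The paper asserts that such shadows exist but does not give one.
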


\begin{proof}
Oracle optimality gives
$\inner{y_t}{y_{t+1}}\ge\norm{y_t}_2^2$.  Cauchy--Schwarz then gives
$\norm{y_t}_2\norm{y_{t+1}}_2\ge\norm{y_t}_2^2$.  If $y_t\ne0$, divide by $\norm{y_t}_2$; if $y_t=0$, the claim is immediate.  Equality of norms forces equality in Cauchy--Schwarz with positive proportionality, and the displayed oracle inequality then forces $y_{t+1}=y_t$.  A finite shadow set cannot support infinitely many strict norm increases.  The terminal condition is exactly self-exposure.  Global optimal shadows satisfy it by the main theorem.
\end{proof}

A point-valued iteration can move among different feasible representatives with the same terminal shadow.  A self-retaining tie rule terminates the representative sequence as well, but that rule is state-dependent and should not be conflated with a fixed deterministic oracle selection.

\section{Computational protocol and complete numerical summaries}
\label{secS:experiments}

All experiments are generated by
\begin{center}
\texttt{python code/experiments.py --all}.
\end{center}
The master seed is 20260723.  The script writes vector PDF and 250-dpi PNG figures, CSV files, and a JSON environment record.  Convex hulls use SciPy/Qhull without joggling when possible and fall back to Qhull joggling only after a numerical degeneracy; dense linear algebra uses NumPy; the large binary sweep uses a Numba-compiled update kernel when available.  The plots use a colorblind-safe palette, but every series also has a distinct marker or line style.

\begin{table}[ht]
\centering
\caption{Recorded computational environment.}
\label{tabS:environment}
\begin{tabular}{@{}ll@{}}
\toprule
Processor & AMD EPYC 9V74 80-Core Processor \\
Operating platform & Linux 6.12.13, x86\_64, glibc 2.41 \\
Python & 3.13.5 \\
NumPy / pandas / SciPy & 2.3.5 / 2.2.3 / 1.17.0 \\
Matplotlib / Numba & 3.10.8 / 0.65.1 \\
Master seed & 20260723 \\
Run timestamp & 2026-07-27 16:47:22 UTC \\
\bottomrule
\end{tabular}
\end{table}

\subsection{Projected-cube scattering}

For each $r\in\{2,3,4\}$, four independent standard Gaussian matrices were generated at each displayed dimension: $n=5,\ldots,17$ for $r=2$, $n=5,\ldots,15$ for $r=3$, and $n=6,\ldots,14$ for $r=4$.  All $2^n$ shadows were formed and their convex hull computed.  The 132 numerical vertex counts all agree with
$2\sum_{j=0}^{r-1}\binom{n-1}{j}$.  The maximum counts in the tested ranges are 34, 212, and 756 for ranks 2, 3, and 4, respectively.

\subsection{Binary exactness and timing}

For the sweep timings, independent Gaussian $B\in\R^{n\times2}$ matrices were used at
\[
 n\in\{100,300,10^3,3\!\times\!10^3,10^4,3\!\times\!10^4,10^5,3\!\times\!10^5,750{,}000\}.
\]
After one warm-up compilation, five repetitions were recorded.  Exhaustive enumeration was run for even $n=10,\ldots,24$ in bit-generated blocks; the sweep and exhaustive objective values were checked for equality on every exhaustive instance.

\begin{table}[ht]
\centering
\caption{Selected median wall-clock times in seconds.}
\label{tabS:runtime}
\begin{tabular}{@{}r rr@{}}
\toprule
$n$ & Rank-space sweep & Exhaustive search \\
\midrule
$10$ & --- & $8.7\times10^{-5}$ \\
$16$ & --- & $6.62\times10^{-3}$ \\
$20$ & --- & $1.76\times10^{-1}$ \\
$24$ & --- & $2.68$ \\
$10^3$ & $5.7\times10^{-5}$ & --- \\
$10^4$ & $8.01\times10^{-4}$ & --- \\
$10^5$ & $1.14\times10^{-2}$ & --- \\
$750{,}000$ & $1.15\times10^{-1}$ & --- \\
\bottomrule
\end{tabular}
\end{table}

\subsection{Cross-family census}

The cross-family figure uses one reproducible instance per row.  Binary and top-$k$/SPCA rows use standard Gaussian rank-two matrices.  The four-phase row uses a standard complex Gaussian rank-one vector.  The graphic-matroid row assigns rank-two Gaussian vectors to the 36 edges of $K_9$ and invokes Kruskal's maximum-spanning-tree oracle in every regular cell.

\begin{table}[ht]
\centering
\caption{Objects enumerated in the cross-family census.}
\label{tabS:crossfamily}
\small
\begin{tabular}{@{}l r r r@{}}
\toprule
Family & Enumerated & Feasible & Feasible / enumerated \\
\midrule
Binary $\{\pm1\}^{24}$ & 48 & 16,777,216 & 349,525 \\
Four-phase, $n=18$ & 72 & 68,719,476,736 & 954,437,177 \\
Top-$8$ subsets, $n=42$ & 19 & 118,030,185 & 6,212,115 \\
Sparse-PCA supports, $n=42$ & 23 & 118,030,185 & 5,131,747 \\
Spanning trees of $K_9$ & 20 & 4,782,969 & 239,149 \\
\bottomrule
\end{tabular}
\end{table}

\subsection{Sparse PCA}

The exactness study uses eight independent instances for each $(n,k)\in\{(12,3),(15,4),(18,4),(20,5)\}$.  Regular rank-two support cells were enumerated, the maximum eigenvalue was computed on each observed support, and the result was compared with all $\binom{n}{k}$ supports.  All 32 gaps were zero at stored floating-point precision.

The scaling study uses four independent matrices at
\[
n\in\{20,30,45,65,90,125,165,220\},
\qquad k=\max\{3,\operatorname{round}(0.18n)\}.
\]
The support count and time summary is given in \cref{tabS:spca}.

\begin{table}[ht]
\centering
\caption{Rank-two sparse-PCA support enumeration.}
\label{tabS:spca}
\begin{tabular}{@{}rrrrr@{}}
\toprule
$n$ & $k$ & Median supports & Range & Median seconds \\
\midrule
20 & 4 & 13.5 & 11--16 & 0.0012 \\
30 & 5 & 15 & 14--15 & 0.0022 \\
45 & 8 & 24 & 22--28 & 0.0054 \\
65 & 12 & 37 & 35--40 & 0.0140 \\
90 & 16 & 49 & 47--60 & 0.0276 \\
125 & 22 & 71 & 67--78 & 0.0577 \\
165 & 30 & 94 & 87--107 & 0.173 \\
220 & 40 & 128 & 122--135 & 0.373 \\
\bottomrule
\end{tabular}
\end{table}

\subsection{Approximate rank}

For each of 30 independent $n=18$ binary instances, a Gaussian rank-two core $B$ was scaled by $1/\sqrt n$.  From 3,000 randomly sampled sign vectors, the code selected a vector with minimum maximum absolute correlation with the core candidate set, normalized it to $u=x_{\mathrm{bad}}/\sqrt n$, and formed the positive-semidefinite residual $E=\tau uu^{\mathsf T}$.  Fourteen $\tau$ values span $[0.002,4]$ geometrically.  Because $\norm E_2=\tau$ and every sign vector has norm $\sqrt n$, the theorem bound is $n\tau$.

Across all 420 trials, the maximum true gap divided by $n\tau$ is 0.8836.  The maximum true gap divided by the computable a posteriori certificate is 0.8947.  The core candidate set remains exactly optimal in 74.8\% of all trials, with recovery naturally decreasing as $\tau$ grows.

\subsection{Directional stability}

A single reproducible $n=35$ rank-two binary instance was used.  All regular binary shadows were enumerated, the maximum-norm shadow $y^\star$ identified, and
$\delta=\min_{y\ne y^\star}\norm{y-y^\star}_2$ computed.  At each of 25 logarithmically spaced relative radii, 400 uniformly random directions and one adversarial direction aimed at the nearest pairwise normal-cone boundary were tested.  This produces 10,025 records.

The largest observed ratio $\norm{\widehat y-y^\star}_2/(2\varepsilon)$ is 0.1541.  The exact nearest boundary lies at $\varepsilon/\delta=2.7076$, whereas the theorem guarantees exact recovery for $\varepsilon/\delta<0.5$.  Random exact recovery remains 100\% through the tested radius 2.2475 and declines thereafter.  These data demonstrate validity of the universal bound while quantifying its instance-specific slack.

\subsection{File inventory and reproducibility}

The package contains:
\begin{itemize}
\item \texttt{code/experiments.py}: all algorithms, simulations, checks, and plots;
\item \texttt{results/*.csv}: raw records for each experiment;
\item \texttt{results/metadata.json}: seed and environment;
\item \texttt{figures/*.pdf}: vector figures used in the paper;
\item \texttt{arxiv.tex}, \texttt{main\_body.tex}, \texttt{appendices.tex}, and \texttt{refs.bib}: manuscript sources.
\end{itemize}
The script raises an exception if an exact sparse-PCA check fails.  Binary exactness checks are performed during the timing experiment.  No values in the paper are manually inserted into the CSV files.

\bibliographystyle{abbrvnat}
\bibliography{refs}

\end{document}